\documentclass[leqno]{article}

\usepackage[frenchb,english]{babel}
\usepackage[utf8]{inputenc}
\usepackage{amsmath}
\usepackage{amssymb}
\usepackage{amsfonts}
\usepackage{enumerate}
\usepackage{vmargin}
\usepackage[all]{xy}
\usepackage{mathrsfs}
\usepackage{mathtools}
\usepackage{lmodern}
\usepackage{slashed}
\usepackage[colorlinks=true,linkcolor=blue,pagebackref=true]{hyperref}%
\setmarginsrb{3cm}{3cm}{3.5cm}{3cm}{0cm}{0cm}{1.5cm}{3cm}
\usepackage{comment}

\usepackage{accents}

\newlength{\dhatheight}

\newcommand{\A}{\ensuremath{\mathcal{A}}}

\newcommand{\B}{\mathrm{B}} 
\let\H\relax 
\newcommand{\H}{\mathrm{H}}
\newcommand{\I}{\mathrm{I}} 
\newcommand{\K}{\ensuremath{\mathbb{K}}}

\let\L\relax 
\newcommand{\L}{\mathrm{L}}
\let\O\relax 
\newcommand{\O}{\ensuremath{\mathbb{O}}}

\newcommand{\M}{\mathrm{M}}

\let\cal\relax
\newcommand{\cal}{\mathcal}

\newcommand{\Zc}{\mathrm{Z}}
\newcommand{\R}{\ensuremath{\mathbb{R}}}

\newcommand{\Id}{\mathrm{Id}}

\renewcommand{\leq}{\ensuremath{\leqslant}}
\renewcommand{\geq}{\ensuremath{\geqslant}}
\newcommand{\qed}{\hfill \vrule height6pt  width6pt depth0pt}

\newcommand{\norm}[1]{\left\Vert#1\right\Vert}

\newcommand{\xra}{\xrightarrow}
\newcommand{\co}{\colon}

\newcommand{\ot}{\otimes}
\newcommand{\ovl}{\overline}

\let\i\relax 
\newcommand{\i}{\mathrm{i}}

\newcommand{\ov}{\overset}
\newcommand{\sa}{\mathrm{sa}}
\newcommand{\JW}{\mathrm{JW}}

\newcommand{\JB}{\mathrm{JB}}

\newcommand{\JBW}{\mathrm{JBW}}

\newcommand{\epsi}{\varepsilon}
\renewcommand{\d}{\mathop{}\mathopen{}\mathrm{d}} 
\renewcommand{\d}{\mathop{}\mathopen{}\mathrm{d}}

\DeclareMathOperator{\Span}{span} 
\DeclareMathOperator{\tr}{Tr} 
\DeclareMathOperator{\Tr}{Tr} 

\newtheorem{thm}{Theorem}[section]

\newtheorem{prop}[thm]{Proposition}

\newtheorem{cor}[thm]{Corollary}
\newtheorem{lemma}[thm]{Lemma}

\newtheorem{remark}[thm]{Remark}
\newtheorem{example}[thm]{Example}

\newenvironment{proof}[1][]{\noindent {\it Proof #1} : }{\hbox{~}\qed
\smallskip
}

\usepackage{tocloft}
\numberwithin{equation}{section}
\usepackage[nottoc,notlot,notlof]{tocbibind}

\let\OLDthebibliography\thebibliography
\renewcommand\thebibliography[1]{
  \OLDthebibliography{#1}
  \setlength{\parskip}{0pt}
  \setlength{\itemsep}{0pt plus 0.3ex}
}

\newcommand\reallywidehat[1]{\arraycolsep=0pt\relax%
\begin{array}{c}
\stretchto{
  \scaleto{
    \scalerel*[\widthof{\ensuremath{#1}}]{\kern-.5pt\bigwedge\kern-.5pt}
    {\rule[-\textheight/2]{1ex}{\textheight}} 
  }{\textheight} %
}{0.5ex}\\           
#1\\                 
\rule{-1ex}{0ex}
\end{array}
}

\begin{document}
\selectlanguage{english}
\title{\bfseries{Spectral nonassociative $\L^p$-spaces for $\JBW^*$-algebras}}
\date{}
\author{\bfseries{C\'edric Arhancet}}%
\maketitle


\begin{abstract}
We complete the construction of tracial spectral nonassociative $\mathrm{L}^p$-spaces for general $\mathrm{JBW}^*$-algebras. More precisely, if $\mathcal{M}$ is a $\mathrm{JBW}^*$-algebra equipped with a normal finite faithful trace $\tau$ and $1 \leq p < \infty$, we prove that $\|x\|_{\mathrm{L}^p(\mathcal{M})} \overset{\mathrm{def}}{=} (\tau[(x^* \circ x)^{\frac p2}])^{\frac1p}$, where $x \in \mathcal{M}$, defines a norm on $\mathcal{M}$. This resolves the remaining exceptional case left open by the corresponding result for $\mathrm{JW}^*$-algebras. The main difficulty is the complexified Albert algebra $\mathrm{H}_3(\mathbb{O}_{\mathbb{C}})$, which admits no embedding into an associative operator algebra. To treat this case, we establish a Jordan analogue of the joint convexity of the Kiefer map $\mathrm{M}_n \times \mathrm{H}_n^{++} \to \mathrm{H}_n^{+}$, $(a,h) \mapsto a^*h^{-1}a$, where $\mathrm{H}_n$ is the space of Hermitian matrices, and a Jordan version of a variational formula of Carlen and Lieb. This provides a complex Banach space framework to Jordan-algebraic models arising in some generalized probabilistic theories. 
\end{abstract}


%
%
%
%
%
%

\makeatletter
 \renewcommand{\@makefntext}[1]{#1}
 \makeatother
 \footnotetext{
 2020 {\it Mathematics subject classification:}
 46L51, 17C65. 
\\
{\it Key words}: nonassociative $\L^p$-spaces, $\JBW^*$-algebras, Albert algebra.}

{
  \hypersetup{linkcolor=blue}
 \tableofcontents
}

\section{Introduction}
\label{sec:Introduction}

Noncommutative $\L^p$-spaces occupy a central position at the interface between operator algebras, functional analysis and quantum information theory. In the tracial setting of a von Neumann algebra $\cal{M}$ equipped with a finite faithful trace $\tau$, their basic norm of Dixmier noncommutative $\L^p$-spaces \cite{Dix53} is given by the familiar spectral formula
\[
\norm{x}_{\L^{p,D}(\cal{M})}
\ov{\mathrm{def}}{=} \tau(|x|^p)^{\frac1p},\quad x \in \cal{M}.
\]
where $|x| \ov{\mathrm{def}}{=} (x^*x)^{\frac12}$ is the modulus of the operator $x$. This definition is compatible with complex interpolation \cite{BeL76} of the couple $(\cal{M},\cal{M}_*)$, where $\cal{M}_*$ is the predual of $\cal{M}$. More precisely, we have an isometric isomorphism $\L^{p,D}(\cal{M}) = (\cal{M},\cal{M}_*)_{\frac{1}{p}}$. Modern operator-algebraic methods in quantum information theory make extensive use of these spaces. These are powerful tools for studying quantum channels, R\'enyi divergences and quantum Markov dynamics.

The origin of Jordan algebras is directly tied to quantum mechanics. Jordan introduced in \cite{Jor33} the symmetrized product
\begin{equation}
\label{Jordan-product}
x \circ y 
=\frac12(xy+yx)
\end{equation}
in order to keep the product of observables within the real vector space of observables. Jordan observed in \cite[(12), p.~288]{JNW34} that, although the product is not associative, it satisfies the weaker associativity relation now referred to as the Jordan identity:
\begin{equation}
\label{Jordan-identity}
x \circ (y \circ x^2)
=(x \circ y)\circ x^2.
\end{equation}
The subsequent work of Jordan, von Neumann and Wigner \cite{JNW34} led to the classification of finite-dimensional formally real Jordan algebras. Besides the selfadjoint parts of real, complex and quaternionic matrix algebras and the spin factors, this classification contains one genuinely exceptional object, the Albert algebra
$$
\H_3(\O)
\ov{\mathrm{def}}{=} \left\{\begin{bmatrix}
   a  & \alpha & \beta  \\
   \ovl{\alpha}  & b & \gamma	\\
    \ovl{\beta} & \ovl{\gamma} &  c  \\
\end{bmatrix}: \alpha,\beta,\gamma \in \O, a,b,c \in \R \right\}
$$ 
of Hermitian $3 \times 3$ matrices with entries in the octonion algebra $\O$ equipped with the Jordan product $\H_3(\O) \times \H_3(\O) \to \H_3(\O)$, $(x,y) \mapsto x \circ y$. Recall that $\O$ is an eight-dimensional nonassociative real division algebra and that the Albert algebra $\H_3(\O)$ is a 27-dimensional unital formally real\footnote{\thefootnote. A Jordan algebra $\A$ is called formally real if $a_1 \circ a_1 + \cdots +a_n \circ a_n =0$ implies $a_1 = \cdots = a_n = 0$ for any $a_1, \ldots, a_n \in \A$.} Jordan algebra, see \cite[Proposition 2.9.2 p.~69]{HOS84} and \cite{Bae02}. This algebra is often called the Albert algebra, since Albert proved in \cite{Alb34} that it is exceptional, that is, it is not isomorphic to a Jordan subalgebra of an associative algebra endowed with the Jordan product \eqref{Jordan-product}.

In finite dimensions, the class of formally real Jordan algebras agrees with that of finite-dimensional $\JBW$-algebras. Hence, the Jordan algebras considered in the seminal work of Jordan, von Neumann, and Wigner naturally fall within the theory of $\JBW$-algebras and, after complexification, within the framework of $\JBW^*$-algebras. The class of $\JBW^*$-algebras, introduced by Edwards in \cite{Edw80}, includes in particular the exceptional factor 
$$
\H_3(\O_{\mathbb{C}})
\ov{\mathrm{def}}{=} \left\{\begin{bmatrix}
   a  & \alpha & \beta  \\
   \ovl{\alpha}  & b & \gamma	\\
    \ovl{\beta} & \ovl{\gamma} &  c  \\
\end{bmatrix}: \alpha,\beta,\gamma \in \O_{\mathbb{C}}, a,b,c \in \mathbb{C} \right\}
$$ 
of Hermitian $3 \times 3$ matrices with entries in the algebra $\O_{\mathbb{C}}\ov{\mathrm{def}}{=} \O \ot_{\mathbb{R}} \mathbb{C}$ of complex octonions equipped with the Jordan product $\H_3(\O_{\mathbb{C}}) \times \H_3(\O_{\mathbb{C}}) \to \H_3(\O_{\mathbb{C}})$, $(x,y) \mapsto x \circ y$. 
Recall that a $\JW^*$-algebra is a weak*-closed Jordan-$*$-subalgebra of a von Neumann algebra, and hence constitutes a particular class of $\JBW^*$-algebras. Moreover, every von Neumann algebra $\cal{M}$, equipped with the Jordan product \eqref{Jordan-product}, is itself a $\JW^*$-algebra. Thus von Neumann algebras form an important subclass of both $\JW^*$-algebras and $\JBW^*$-algebras.

It is well known that the selfadjoint parts of $\JBW^*$-algebras and $\JW^*$-algebras are exactly $\JBW$-algebras and $\JW$-algebras, respectively. These structures are real linear spaces of selfadjoint operators which are weak* closed and closed under the Jordan product $\circ$. The theory of $\JW$-algebras goes back to Topping \cite{Top65} and was subsequently developed by several authors. For more background on these classes of Jordan algebras, we refer to the monographs \cite{AlS03}, \cite{ARU97}, \cite{CGRP14}, \cite{CGRP18}, \cite{Chu12}, and \cite{HOS84}, as well as to the important paper \cite{Sto66} and the references therein. In this framework, a trace on a $\JBW^*$-algebra $\cal{M}$ is a positive linear functional $\tau$ satisfying
\begin{equation}
\label{associative-trace}
\tau(x\circ(y\circ z))
=\tau((x\circ y)\circ z), \quad x,y,z \in \cal{M}.
\end{equation}

%


The first systematic theory of tracial nonassociative $\L^p$-spaces was developed by Iochum in \cite{Ioc86} (see also \cite{Ioc84}). If $1 \leq p < \infty$ and if $\cal{A}$ is a $\JBW$-algebra, with product $\circ$, equipped with a normal finite faithful trace $\tau$, Iochum introduced the \textit{real} Banach space $\L^{p,I}(\cal{A})$, defined as the completion of $\cal{A}$ for the norm
\begin{equation}
\label{Iochum-introduction}
\norm{x}_{\L^{p,I}(\cal{A})}
\ov{\mathrm{def}}{=}
\big(\tau(|x|^p)\big)^{\frac1p}, \quad x \in \cal{A},
\end{equation}
where $|x| \ov{\mathrm{def}}{=} (x \circ x)^{\frac12}$. He proved the analogues of the basic classical properties of $\L^p$-spaces, including H\"older's inequalities, duality and uniform convexity. An important feature of his approach is that it is global and direct. It does not rely on reducing the problem, through classification theorems, to associative models.


There is, however, an important reason for passing from the real setting of $\JBW$-algebras to complex setting of $\JBW^*$-algebras. Modern operator-algebraic methods in quantum information theory make extensive use of complex numbers. For instance, in the associative setting, complex interpolation is a central ingredient in the proof of some important results. If Jordan-algebraic probabilistic models are to support an analytic theory comparable to the usual quantum theory, it is natural to seek genuinely complex nonassociative $\L^p$-spaces.

This leads to the spectral construction considered in \cite{Arh24a,ArL26}. If $\cal M$ is a $\JBW^*$-algebra equipped with a normal finite faithful trace $\tau$, define
\begin{equation}
\label{norm-Lp-intro}
\norm{x}_{\L^p(\cal M)}
\ov{\mathrm{def}}{=}
\left(\tau\left[(x^*\circ x)^{\frac p2}\right]\right)^{\frac1p},
\quad x \in \cal M.
\end{equation}
On the selfadjoint part $\cal A=\cal M_{\sa}$ this reduces precisely to the norm \eqref{Iochum-introduction} considered by Iochum. However, for a general element, the situation changes substantially. In \cite{ArL26}, the spectral expression \eqref{norm-Lp-intro} was proved to define a norm for every $\JW^*$-algebra $\cal{M}$ equipped with a normal finite faithful trace $\tau$. Moreover, the resulting spectral norm was compared with the interpolation norm obtained from the compatible couple formed by the algebra and its predual. 

It therefore leaves untouched precisely the exceptional part of the theory. By the structure theory of $\JBW$-algebras, the remaining obstruction is concentrated in algebras of the form $\L^\infty(\Omega,\H_3(\O_{\mathbb C}))$, where $\O_{\mathbb C}$ denotes the algebra of complex octonions. This led to the question of whether \eqref{norm-Lp-intro} is a norm on such algebras. The main purpose of the present paper is to answer this question positively and, as a consequence, to complete the spectral construction for arbitrary $\JBW^*$-algebras equipped with normal finite faithful trace. More precisely, we prove that if $\cal{M}$ is any $\JBW^*$-algebra equipped with a normal finite faithful trace and $1 \leq p < \infty$, then \eqref{norm-Lp-intro} defines a norm on $\cal{M}$. Thus the exceptional Albert component, despite having no associative realization, satisfies the same fundamental Minkowski property as the special Jordan components.

Recall that $\L^{p,A}(\cal M)\ov{\mathrm{def}}{=}(\cal M,\cal M_*)_{\frac1p}$ denotes the nonassociative $\L^p$-space defined by complex interpolation. A striking feature of the Jordan setting, established in \cite[Theorem 5.1]{ArL26}, is that the spectral and interpolation constructions, although canonically associated with the same tracial $\JW^*$-algebra, need not coincide isometrically. More precisely, if $1 \leq p \leq 2$, then
\[
\norm{x}_{\L^{p,A}(\cal M)}
\leq\norm{x}_{\L^p(\cal M)}
\leq2^{\frac1p-\frac12}\norm{x}_{\L^{p,A}(\cal M)},
\]
whereas, if $2\leq p<\infty$, then
\[
\norm{x}_{\L^p(\cal M)}
\leq\norm{x}_{\L^{p,A}(\cal M)}
\leq2^{\frac12-\frac1p}\norm{x}_{\L^p(\cal M)}.
\]
The constants are universal and optimal. Thus, unlike the associative theory of tracial von Neumann algebras, where the spectral and interpolation constructions yield the same norm, the Jordan product produces a genuine metric distinction between them while preserving a uniform equivalence depending only on $p$. One of the purposes of the present paper is also to determine whether this phenomenon persists in the exceptional part of the theory.

\paragraph{Approach of the paper} The exceptional case requires a different argument from the one available for $\JW^*$-algebras. Let $\cal A=\H_3(\O)$ and $\cal M=\H_3(\O_{\mathbb C})=\cal A+\i\cal A$. If $x=a+\i b$, then
\[
\norm{x}_{\L^p(\cal M)}
=
\norm{a^2+b^2}_{\L^{\frac{p}{2},I}(\cal A)}^{\frac12}.
\]
For $p\geq2$, the triangle inequality can be deduced directly from order inequalities in $\cal A$ and the monotonicity and Minkowski inequality of Iochum's $\L^{\frac{p}{2}}$-norm. The range $1 \leq p < 2$ is substantially more delicate, since $\frac{p}{2}<1$ and the latter argument is no longer available. To overcome this difficulty, we develop a variational approach inspired by matrix analysis. We first prove that the Jordan fractional map
\[
\cal{A} \times \cal{A}_{++} \to \cal{A}_+, (a,h) \mapsto U_a(h^{-1})
\]
is jointly convex for a $\JBW$-algebra $\cal{A}$. This can be viewed as the Jordan counterpart of the classical joint convexity \cite{Kie59} of the matrix function $\M_n \times \mathrm{H}_n^{++} \to \mathrm{H}_n^{+}$, $(a,h) \mapsto a^*h^{-1}a$. The proof uses the Jordan version of Kadison's inequality from \cite{RoY82} together with the fundamental formula for quadratic representations. We then establish a Jordan analogue of a  variational formula of Carlen and Lieb \cite{CaL08}:
\[
\tau(u^q)
=
\inf_{h\in\cal A_{++}}
\left\{
q\tau(u\circ h^{-1})+(1-q)\tau\big(h^{\frac q{1-q}}\big)
\right\},
\quad 0 < q < 1.
\]
These two ingredients imply the joint convexity of
yields the desired Minkowski inequality. In this way, methods originating in the theory of matrix perspectives and trace inequalities survive in the genuinely exceptional Jordan setting without passing through an associative representation.

We next treat the measurable exceptional part. For $\cal M=\L^\infty(\Omega,\H_3(\O_{\mathbb C}))$, we show that every normal finite faithful trace disintegrates against the unique normalized trace of the Albert factor. More precisely, there exists a finite measure $\nu$, equivalent to the original measure, such that
\[
\tau(f)
=\int_\Omega\tau_{\H_3(\O_{\mathbb C})}(f(\omega)) \d \nu(\omega).
\]
Consequently, we have
\[
\norm{f}_{\L^p(\cal M)}
=
\left(
\int_\Omega
\norm{f(\omega)}_{\L^p(\H_3(\O_{\mathbb C}))}^p
\,d\nu(\omega)
\right)^{\frac1p},
\]
and the resulting completion identifies canonically with the Bochner space $\L^p(\Omega,\nu,\L^p(\H_3(\O_{\mathbb{C}}))$. Combining this description with the decomposition of a general $\JBW$-algebra into its special and purely exceptional parts completes the proof of the general theorem.

\paragraph{Scope of the paper} There is also a broader motivation for isolating the exceptional case. Euclidean Jordan algebras occur naturally in generalized probabilistic theories, see \cite{BGW15}, \cite{Bar07}, \cite{BaW11}, \cite{BaW14}, \cite{CDP11}, \cite{Cho19}, \cite{DL70}, \cite{HaW12}, \cite{Har01}, \cite{JaH14}, \cite{LRTF21}, \cite{LPW18}, \cite{Lam18}, \cite{Lud83}, \cite{Lud85}, \cite{Nie20}, \cite{Mul21}, \cite{Pla23}, \cite{Sha21}, \cite{Wil19}, \cite{Wil25} and \cite{WW21}. The Koecher--Vinberg correspondence identifies them with finite-dimensional ordered spaces having homogeneous self-dual cones, and several operational reconstruction results recover Jordan state spaces from probabilistically meaningful assumptions; see, for instance, \cite{BUW23} and the references therein. From this point of view, the Albert algebra is not merely an algebraic anomaly: it is one of the irreducible state spaces allowed by the single-system Jordan framework. At the same time, its behavior under composition is highly restrictive. Under natural assumptions on composites, Barnum, Graydon and Wilce show in \cite{BGW20} that an exceptional system cannot form a nontrivial composite with another nonclassical Jordan system. This tension between the richness of the single-system theory and the rigidity of composition makes the Albert algebra a useful boundary case for generalized probabilistic theories.

Recent developments further suggest that analytical structures beyond the usual complex matrix formalism deserve to be investigated rather than dismissed at the outset. In particular, the authors of \cite{RTWL21} showed, within the standard tensor-product formulation of quantum mechanics, that real and complex Hilbert-space quantum theories lead to different predictions in suitable network scenarios. Thus the passage from real to complex structures can acquire operational significance once composition and dynamics are taken into account. Our use of the complexification $\H_3(\O_{\mathbb C})$ is of a different mathematical nature, but this phenomenon provides additional motivation for understanding complex Jordan models on their own terms rather than restricting attention to their real selfadjoint parts.

Finally, Jordan-algebraic probabilistic models have begun to support genuinely information-theoretic results. For example, Sonoda, Arai and Hayashi \cite{SAH25} recently established an analogue of Stein's lemma for generalized probabilistic models based on Euclidean Jordan algebras, developing Jordan versions of several entropic and hypothesis-testing tools. Such results suggest that nonassociative $\L^p$-methods may eventually play for Jordan probabilistic theories a role analogous to that played by noncommutative $\L^p$-spaces in quantum information theory. The present paper provides one of the basic analytic prerequisites for such a program. Now, the spectral formula \eqref{norm-Lp-intro} gives a genuine complex Banach space for every $\JBW^*$-algebra equipped with a normal finite faithful trace, including the exceptional part.

We also refer to \cite{CGRP18}, \cite{EPV25b}, \cite{Fur16}, \cite{Isi19}, \cite{LMO+26}, \cite{LuW26}, \cite{McC78}, \cite{McC04}, \cite{Upm85} and \cite{Upm87} for a description of some applications of Jordan algebras to complex analysis in finite and infinite dimensions, harmonic analysis on homogeneous spaces, operator theory, projective geometry, quantum information theory and foundations of quantum mechanics.

\paragraph{Structure of the paper}
The paper is organized as follows. In Section~\ref{sec-preliminaries} we recall the necessary background on $\JBW$-algebras and $\JBW^*$-algebras, traces and Iochum's nonassociative $\L^p$-spaces. Section~\ref{sec-H3OC} is devoted to the complexified Albert algebra. We establish the required joint convexity and variational results and prove the triangle inequality for \eqref{norm-Lp-intro}. In Section~\ref{sec-Linfty} we treat $\L^\infty(\Omega,\H_3(\O_{\mathbb C}))$, identify the trace and the corresponding $\L^p$-norm fiberwise, and obtain a Bochner-space description. Finally, in Section~\ref{sec-JBW-star} we combine the exceptional case with the structure theory of $\JBW^*$-algebras to prove the general result. In Section~\ref{sec-comparison-interpolation-Albert}, we return to the comparison between the spectral and complex interpolation constructions. We show that, for the purely exceptional algebra $\L^\infty(\Omega,\H_3(\O_{\mathbb C}))$, the two norms satisfy exactly the same sharp comparison estimates as in the $\JW^*$-case. In particular, the exceptional component does not increase the universal distortion $2^{|\frac1p-\frac12|}$ between the two natural nonassociative $\L^p$-norms.

\section{Preliminaries}
\label{sec-preliminaries}

The purpose of this section is twofold: we fix terminology from Jordan operator algebra theory, and we isolate the few structural facts that will be repeatedly used in the paper.

\paragraph{Jordan algebras} A Jordan algebra $\A$ over a field $\K$ is a vector space $\A$ over $\K$ equipped with a (not necessarily associative) commutative bilinear product $\circ$ that satisfies the Jordan identity $x \circ (x^2 \circ y) =x^2\circ (x \circ y)$ of \eqref{Jordan-identity} for any $x,y \in \A$, see e.g.~\cite[p.~162]{CGRP14} or \cite[Definition 1.1 p.~3]{AlS03}. This means that the multiplication operators by $x$ and $x^2$ commute. 

Following \cite[Definition 1.5 p.~5]{AlS03} and \cite[3.1.4 p.~76]{HOS84}, a $\JB$-algebra is a Jordan algebra $\cal{A}$ over the scalar field $\R$ equipped with a complete norm satisfying the properties 
\begin{equation}
\label{def-JB}
\norm{x \circ y} \leq \norm{x} \norm{y}, \quad \|x^2\|=\norm{x}^2 
\quad \text{and} \quad \|x^2\| \leq \|x^2+y^2\|
\end{equation}
for any $x,y \in \A$. A $\JBW$-algebra is a $\JB$-algebra which is a dual Banach space \cite[p.~111]{HOS84}. In this case, the predual is unique.

A $\JB^*$-algebra \cite[p.~91]{HOS84} \cite[Definition 3.3.1 p.~345]{CGRP14} is a complex Banach space $\cal{M}$ which is a complex Jordan algebra equipped with an involution satisfying 
\begin{equation}
\label{def-JBstar}
\norm{x \circ y} \leq \norm{x} \norm{y},\quad  \norm{x^*}=\norm{x} 
\quad \text{and} \quad \norm{\{x,x^*,x\}}=\norm{x}^3
\end{equation}
for any $x,y \in \cal{M}$, where we use the Jordan triple product \cite[p.~25]{HOS84}\footnote{\thefootnote. We warn the reader that another definition is often used in some papers for the triple product:
\begin{equation}
\label{triple product} 
\{x,y,z\} 
 \ov{\mathrm{def}}{=} (x \circ y^*) \circ z + (y^* \circ z) \circ x - (x \circ z) \circ y^*.
\end{equation}}  
$$
\{x,y,z\}
\ov{\mathrm{def}}{=} (x \circ y) \circ z+(y \circ z) \circ x-(x \circ z) \circ y.
$$ 
As in \cite[p.~4]{CGRP18}, we say that a $\JB^*$-algebra which is a dual Banach space is a $\JBW^*$-algebra. By \cite[Lemma 4.1.7 p.~96]{HOS84}, any $\JBW$-algebra is unital.

An element $x$ in a $\JB^*$-algebra is called selfadjoint if $x^* = x$. According to \cite[Corollary 5.1.29 p.~9]{CGRP18}, the set $\cal{M}_\sa$ of selfadjoint elements of a $\JBW^*$-algebra $\cal{M}$ has a canonical structure of $\JBW$-algebra. Conversely, if $\A$ is a $\JBW$-algebra then by \cite[Corollary 5.1.41 p.~15]{CGRP18} there exists a unique $\JBW^*$-algebra $\cal{M}$ such that $\A$ is the selfadjoint part $\cal{M}_\sa$ of $\cal{M}$. 


An element $x$ of a $\JB^*$-algebra $\cal{M}$ is said to be positive \cite[p.~9]{CGRP18} \cite[p.~7]{AlS03} if $x \in \cal{M}_\sa$ and if we can write $x = y \circ y$ for some element $y \in \cal{M}_\sa$.

%


\paragraph{Centers and factors} Two elements $x$ and $y$ of a Jordan algebra $\A$ are said to operator commute \cite[p.~44]{HOS84} 
if for any $z \in \A$ we have $(x \circ z) \circ y= x \circ (z \circ y)$. The center $\Zc(\A)$ of $\A$ \cite[p.~45]{HOS84}  is the set of all elements of $\A$ which operator commute with all elements of $\A$. We say that an element $x \in \A$ is central if it belongs to the center. By \cite[Lemma 2.5.3 p.~45]{HOS84} and \cite[Proposition 2.36 p.~56]{AlS03}, the center $\Zc(\A)$ is an associative $\JBW$-subalgebra of $\A$. 

Following \cite[p.~115]{HOS84} and \cite[p.~348]{CGRP18}, if the center of a non-zero $\JBW$-algebra $\A$ only consists of scalar multiples of the identity, we say that $\A$ is a $\JBW$-factor. In this case, we say that the associated $\JBW^*$-algebra $\cal{M}$, such that $\cal{M}_\sa=\A$, is a $\JBW^*$-factor. We refer to \cite[Theorem 6.1.40 p.~362]{CGRP18} for a classification of $\JBW$-factors and $\JBW^*$-factors.

\paragraph{$\JW$-algebras} Recall that a (concrete) $\JW$-algebra \cite[p.~95]{HOS84} \cite[Definition 2.70 p.~70]{AlS03} 
 is a weak* closed Jordan subalgebra $\A$ of the selfadjoint part $\B(H)_\sa$ of the space $\B(H)$ of bounded operators on some complex Hilbert space $H$, that is a real linear space of selfadjoint operators which is closed for the weak* topology and closed under the Jordan product $\circ$. Note that a $\JW$-algebra is a $\JBW$-algebra by \cite[p.~95]{HOS84}. In this situation, by \cite[Proposition 1.49 p.~28]{AlS03}, two elements $x$ and $y$ of a $\JW$-algebra $\A$ operator commute if and only if $x$ and $y$ commute in $\B(H)$. A $\JW$-algebra which is a $\JBW$-factor is called a $\JW$-factor.

\paragraph{Projections} If $p$ is a projection (i.e.~$p \circ p=p$) of a $\JBW$-algebra $\A$, the smallest central projection $q$ such that $q \geq p$ is called the central cover of $p$ and denoted by $c(p)$, see \cite[Definition 2.38 p.~56]{AlS03}. We say that a projection $p$ of a $\JBW$-algebra $\A$ is abelian if the algebra $\{p,\A,p\}$ is associative \cite[p.~122]{HOS84} \cite[Definition 3.14 p.~85]{AlS03}. Two projections $p,q \in \A$ are said to be orthogonal if $p \circ q=0$ \cite[p.~45]{AlS03}. An element $p$ of a $\JBW^*$-algebra is said to be a projection if $p^* = p$ and $p \circ p = p$.

\paragraph{$\JW^*$-algebras} We define a $\JW^*$-algebra as a complex subspace of the space $\B(H)$ which is closed for the weak* topology and closed under the Jordan product $\circ$ and the involution, for some complex Hilbert space $H$.  
A $\JW^*$-algebra is a $\JBW^*$-algebra. The selfadjoint part of a $\JW^*$-algebra is a $\JW$-algebra. Conversely, if $\A$ is a $\JW$-algebra included in $\B(H)$ then the complexification $\A_\mathbb{C}=\A+\i \A$ is a $\JW^*$-algebra included in $\B(H)$. 


\paragraph{Traces} A trace on a $\JBW$-algebra $\A$ is a function $\tau$ defined on the subset $\A_+$ of positive
elements of $\A$ with values in $[0,+\infty]$ satisfying the following conditions:
\begin{enumerate}
\item $\tau(x+y)=\tau(x)+\tau(y)$ for any $x, y \in \A_+$,
\item $\tau(\lambda x)=\lambda\tau(x)$ for any $x \in \A_+$ and any $\lambda \geq 0$, where $0 \cdot (+\infty)=0$,
\item $\tau(s \circ x \circ s)=\tau(x)$ for any $x \in \A_+$ and any arbitrary symmetry $s$ of $\A$.
\end{enumerate}
Recall that a symmetry is an element $s$ such that $s \circ s=1$. The trace $\tau$ is said to be faithful if $\tau(x) > 0$ for all non-zero $x \in \A_+$, finite if $\tau(1) < + \infty$, semifinite if given any non-zero $x \in \A_+$ there is a non-zero $y \in \A_+$ such that $y \leq x$ with $\tau(y) < +\infty$. The trace $\tau$ is normal if for every increasing net $(x_i)$ of positive elements such that $x_i \to x$ where $x \in \A_+$, we have $\tau(x_i) \to \tau(x)$. We refer to \cite{AyA85}, \cite{Ayu82}, \cite{Ayu92}, \cite{Kin83} and \cite{PeS82} for more information on traces on $\JBW$-algebras.

Every finite trace on a $\JBW$-algebra $\A$ can be extended by linearity to a linear functional on $\A$. Thus a finite trace on a $\JBW$-algebra $\A$ can be seen as a positive linear functional $\tau$ satisfying the condition $\tau(s \circ x \circ s)=\tau(x)$ for all $x \in \A$ and all symmetries $s \in \A$. By \cite[Lemma 5.18 p.~147]{AlS03}, it is known that the last condition is equivalent to the formula 
\begin{equation}
\label{Def-trace}
\tau (x \circ (y \circ z)) 
= \tau((x \circ y) \circ z), \quad x,y,z \in \A.
\end{equation}
If $\tau$ is in addition normal and faithful, its complex-linear extension \cite[Corollary 5.1.41 p.~15]{CGRP18} provides a normal faithful positive linear functional on the associated $\JBW^*$-algebra $\cal{M}$ satisfying 
\begin{equation}
\label{trace}
\tau (x \circ(y \circ z))
=\tau((x \circ y) \circ z), \quad x,y,z \in \cal{M}.
\end{equation}
We say that such a map is a normal finite faithful trace on $\cal{M}$.
\section{Case of the $\JBW^*$-algebra $\H_3(\O_{\mathbb{C}})$}
\label{sec-H3OC}

We now address the main exceptional case. Let $\cal A=\H_3(\O)$ and let $\cal M=\H_3(\O_{\mathbb C})=\cal A+\i\cal A$ be its complexification, equipped with the normalized trace. Since $\cal M$ admits no realization as a Jordan subalgebra of an associative operator algebra, the embedding arguments used for $\JW^*$-algebras in \cite{ArL26} are no longer available. Our first observation is that the spectral norm of an element $x=a+\i b$ can nevertheless be expressed entirely in terms of Iochum's real nonassociative $\L^q$-spaces applied to the positive element $a^2+b^2$. This reduction will allow us to treat the range $p\geq2$ by order and Minkowski inequalities. The range $1\leq p<2$ requires new ingredients, developed below, based on joint convexity and a variational representation for fractional powers.

\begin{prop}
\label{prop-connection}
Suppose that $0 < p <\infty$. We let $q  \ov{\mathrm{def}}{=} \frac{p}{2}$. Let $x \in \cal{M}$. Consider its  unique decomposition $x = a + \i b$ with $a,b \in \cal{A}$. We have
\begin{equation}
\label{formule-utile}
\norm{x}_{\L^p(\cal{M})}
=\norm{a^2+b^2}_{\L^{q,I}(\cal A)}^{\frac{1}{2}}.
\end{equation}
\end{prop}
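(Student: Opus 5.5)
The identity reduces to computing $x^*\circ x$ and then checking that the complex trace, restricted to the selfadjoint part, is exactly Iochum's trace. Write $x=a+\i b$ with $a,b\in\cal A$. The involution on $\cal M$ is the complex-conjugate-linear extension of the identity on $\cal A$, so $x^*=a-\i b$. Expanding by complex bilinearity of $\circ$ and using its commutativity gives
\[
x^*\circ x=(a-\i b)\circ(a+\i b)=a\circ a+\i\, a\circ b-\i\, b\circ a+b\circ b=a^2+b^2 .
\]
In particular $x^*\circ x$ is selfadjoint and lies in $\cal A$. It is positive there, being a sum of squares of selfadjoint elements: the positive cone of a $\JB$-algebra is a convex cone and squares are positive.

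Next I would justify that $(x^*\circ x)^{\frac p2}$ computed in $\cal M$ coincides with $(a^2+b^2)^{q}$ computed in $\cal A$, with $q=\frac p2$. Both are defined by continuous functional calculus on the positive element $u=a^2+b^2$. This calculus takes place in the norm-closed associative subalgebra generated by $u$ and $1$. Since $\cal A=\cal M_\sa$ is a Jordan subalgebra with the same unit and the same norm, this subalgebra is the same $\JB$-algebra whether viewed inside $\cal A$ or inside $\cal M$, and it is isomorphic to $C(\mathrm{sp}(u))$. Moreover $u^q\in\cal A_+$.

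Applying $\tau$, whose restriction to $\cal A$ is the given real trace by the construction of the complex-linear extension, yields
\[
\norm{x}_{\L^p(\cal M)}^p=\tau(u^{q})=\tau\big(|u|^{q}\big),
\]
since $|u|=(u\circ u)^{\frac12}=u$ for $u\geq 0$. The right-hand side equals $\norm{u}_{\L^{q,I}(\cal A)}^{q}$ by the defining formula \eqref{Iochum-introduction}, read as a quantity even when $q<1$, where it need not be a norm. Taking the $p$-th root and using $\frac qp=\frac12$ gives \eqref{formule-utile}.

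The main point requiring care is the consistency of functional calculus between $\cal M$ and $\cal A$, i.e. the spectral-theorem step. I would handle it by citing the standard fact that the $\JB$-algebra generated by a single selfadjoint element and the unit is associative and isometrically isomorphic to a $C(K)$ space, and noting that the $\JB^*$ norm restricted to $\cal M_\sa$ is the $\JBW$ norm.
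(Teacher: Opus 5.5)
Your proposal is correct and follows the paper's proof: compute $x^*\circ x=a^2+b^2$ by commutativity of the Jordan product, observe that this element is positive so its modulus is itself, and read off the identity from the definition \eqref{Iochum-introduction} using $\frac{q}{p}=\frac12$. Your extra remarks make explicit two points the paper leaves implicit, namely that the functional calculus of $a^2+b^2$ agrees in $\cal A$ and in $\cal M$, and that $\tau$ restricts to Iochum's trace on $\cal A$.
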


\begin{proof}
Since the Jordan product is commutative, we have
\begin{equation}
\label{Albert-xstarx-real-parts}
x^* \circ x
=(a+\i b)^* \circ (a+\i b)
=(a-\i b) \circ (a+\i b)
=a^2+b^2.
\end{equation}
Since $a^2+b^2\in\cal A_+$, we have $|a^2+b^2|=a^2+b^2$. Note that $\frac1p=\frac1{2q}$. Consequently, we obtain
\begin{align*}
\norm{x}_{\L^p(\cal M)}
&\ov{\eqref{norm-Lp-intro}}{=} \big(\tau[(x^*\circ x)^{\frac p2}]\big)^{\frac1p}
\ov{\eqref{Albert-xstarx-real-parts}}{=} \big(\tau[(a^2+b^2)^{\frac p2}]\big)^{\frac1p}
=\big(\tau[(a^2+b^2)^q]\big)^{\frac1{2q}}
\ov{\eqref{Iochum-introduction}}{=} \norm{a^2+b^2}_{\L^{q,I}(\cal A)}^{\frac12}.
\end{align*}
\end{proof}

Let $\cal{A}$ be a $\JBW$-algebra and let $\cal{M}=\cal{A}+\i \cal{A}$ be its complexification. We denote by $\cal{A}_{++}$ the cone of positive invertible elements of $\cal{A}$. For $a \in \cal{A}$, let
\[
U_a
\ov{\mathrm{def}}{=}
2L_a^2-L_{a^2} \co \cal{A} \to \cal{A}
\]
be the quadratic representation of $\cal{A}$, see \cite[(1.13) p.~9]{AlS03}, \cite[p.~35]{HOS84} and \cite[p.~121]{CGRP14}, where $L_a \co \cal{A} \to \cal{A}$, $b \mapsto a \circ b$ is the Jordan multiplication operator. More explicitly, we have
\begin{equation}
\label{quad-rep}
U_a(b)
=2 a \circ (a \circ b)-a^2 \circ b, \quad a,b \in \cal{A} .
\end{equation}

\begin{example} \normalfont
If $\cal{A}$ is a $\JW$-algebra, we have $U_a(b)=aba$ by \cite[p.~9]{AlS03} for any $a,b \in \cal{A}$.
\end{example}

Let $a \in \cal{A}$. By \cite[Theorem 1.25 p.~14]{AlS03}, the map $U_a \co \cal{A} \to \cal{A}$ is bounded and positive, i.e., $U_a(\cal{A}_+) \subset \cal{A}_+$. Moreover, according to \cite[Lemma 1.23 p.~13]{AlS03}, $a$ is an invertible element if and only if $U_a \co \cal{A} \to \cal{A}$ has a bounded inverse, and in this case the inverse map is $U_{a^{-1}} \co \cal{A} \to \cal{A}$.
Finally, by \cite[Proposition 3.4.15 p.~364]{CGRP14} we have the fundamental formula
\begin{equation}
\label{fundamental-formula-P-Albert}
U_{U_{a}(b)}
=U_{a}U_{b}U_a, \quad a,b \in \cal{A}.
\end{equation} 
Finally, we have
\begin{equation}
\label{Ua-de-1}
U_a(1)
=a^2.
\end{equation}

The following result is already known, see, e.g., \cite[Lemma 1 (v) p.~418]{Ioc86}. For the sake of completeness, we give the short computation.

\begin{lemma}
\label{lemma-trace-Ua}
Let $\cal{A}$ be a $\JBW$-algebra equipped with a finite trace. For any $b,c \in \cal{A}$, we have
\begin{equation}
\label{formula-Iochum}
\tau(U_b(c))
=\tau(b^2 \circ c).
\end{equation}
\end{lemma}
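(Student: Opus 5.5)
The plan is to expand $U_b(c)$ using the explicit formula \eqref{quad-rep} and then use linearity of $\tau$ together with the associativity property \eqref{Def-trace} of the trace. The finite trace extends by linearity to a linear functional on $\cal{A}$, so \eqref{Def-trace} is available for all $x,y,z \in \cal{A}$. By \eqref{quad-rep} and linearity,
\[
\tau(U_b(c))
=2\tau\big(b \circ (b \circ c)\big)-\tau(b^2 \circ c).
\]

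The single step is to rewrite the first term. Applying \eqref{Def-trace} with $x=b$, $y=b$, $z=c$ gives
\[
\tau\big(b \circ (b \circ c)\big)=\tau\big((b \circ b) \circ c\big)=\tau(b^2 \circ c).
\]
Substituting this back yields $\tau(U_b(c))=2\tau(b^2\circ c)-\tau(b^2\circ c)=\tau(b^2\circ c)$, which is \eqref{formula-Iochum}.

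There is no real obstacle. The only point to check is that the trace, defined a priori on $\cal{A}_+$, has been extended linearly to all of $\cal{A}$ and still satisfies \eqref{Def-trace} there. This is exactly what the preliminaries record via \cite[Lemma 5.18 p.~147]{AlS03}, since finiteness is assumed.
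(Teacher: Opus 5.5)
Your proposal is correct and is essentially the paper's own proof: expand $U_b(c)=2b\circ(b\circ c)-b^2\circ c$, then apply the trace associativity identity with $x=y=b$, $z=c$ to get $2\tau(b^2\circ c)-\tau(b^2\circ c)=\tau(b^2\circ c)$. Your remark that the finite trace extends linearly to all of $\mathcal{A}$ while keeping this identity is the right justification for the step.
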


\begin{proof}
For any $b,c \in \cal{A}$, the tracial property gives
\begin{align*}
\MoveEqLeft
\tau(U_b(c))
\ov{\eqref{quad-rep}}{=} 2\tau\big(b\circ(b \circ c)\big)-\tau(b^2 \circ c)
\ov{\eqref{associative-trace}}{=} 2\tau\big(b^2 \circ c\big)-\tau(b^2 \circ c) 
= \tau(b^2 \circ c).
\end{align*}
\end{proof}

Now, we establish the following proposition. The first part can be viewed as a Jordan analogue of the joint convexity of the function $\M_n \times \mathrm{H}_n^{++} \to \mathrm{H}_n^{+}$, $(a,h) \mapsto a^*h^{-1}a$, proved in \cite[Lemma 3.2 p.~288]{Kie59} (see also \cite[p.~1]{Rus22}). It is also closely related to the nonassociative perspective functions on $\JB$-algebras introduced in \cite{WaW21}. A function $f \co C \times D \to \R$ is said to be jointly convex if
$$
f(t x_1 + (1-t)x_2, t y_1 + (1-t) y_2) 
\leq t f(x_1,y_1) + (1-t) f(x_2, y_2),\quad t \in [0,1],
$$
i.e., convex on the product $C \times D$ of two convex subsets of a vector space.

\begin{prop}
\label{prop-Albert-quadratic-fractional}
Let $\cal{A}$ be a $\JBW$-algebra. The map $\cal{A} \times \cal{A}_{++} \to \cal{A}_+$, $(a,h) \to U_a(h^{-1})$ is jointly convex with respect to the order of $\cal{A}$. Consequently, if $\tau$ is a normal finite trace on $\cal{A}$, then the map $\Phi \co \cal{A} \times \cal{A}_{++} \to [0,\infty)$, defined by
\begin{equation}
\label{def-de-Phi}
\Phi(a,h)
\ov{\mathrm{def}}{=}
\tau(a^2\circ h^{-1}), \quad a \in \cal{A}, h \in \cal{A}_{++},
\end{equation}
is jointly convex.
\end{prop}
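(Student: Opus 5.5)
The plan is to reduce the operator joint convexity to the Jordan version of Kadison's inequality from \cite{RoY82}: if $\Psi\co \cal{B}\to\cal{A}$ is a unital positive linear map between $\JB$-algebras, then $\Psi(y)^2\leq \Psi(y^2)$ for every $y\in\cal{B}$. We will apply it with $\cal{B}=\cal{A}\oplus\cal{A}$, which is a $\JB$-algebra for the coordinatewise product and the max norm. Positivity of the map itself is immediate, since $U_a$ is positive and $h^{-1}\in\cal{A}_+$.

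Fix $a_1,a_2\in\cal{A}$, $h_1,h_2\in\cal{A}_{++}$ and $t\in[0,1]$. Put $a=ta_1+(1-t)a_2$ and $h=th_1+(1-t)h_2\in\cal{A}_{++}$. Define
\[
\Psi(x_1,x_2)\ov{\mathrm{def}}{=} U_{h^{-1/2}}\big(tU_{h_1^{1/2}}(x_1)+(1-t)U_{h_2^{1/2}}(x_2)\big).
\]
This map is positive because each $U_c$ is positive. It is unital because $U_{h_i^{1/2}}(1)=h_i$ by \eqref{Ua-de-1}, so $\Psi(1,1)=U_{h^{-1/2}}(h)=1$. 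The last identity holds since $h^{-1/2}$, $h$ and $1$ lie in the associative subalgebra generated by $h$.

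Next take $y=(U_{h_1^{-1/2}}(a_1),U_{h_2^{-1/2}}(a_2))$. Since $U_{c^{-1/2}}$ is the inverse of $U_{c^{1/2}}$ for $c\in\cal{A}_{++}$ \cite[Lemma 1.23]{AlS03}, we get $\Psi(y)=U_{h^{-1/2}}(a)$. For the square we use \eqref{Ua-de-1} and the fundamental formula \eqref{fundamental-formula-P-Albert}:
\[
\big(U_{h_i^{-1/2}}(a_i)\big)^2=U_{U_{h_i^{-1/2}}(a_i)}(1)=U_{h_i^{-1/2}}U_{a_i}U_{h_i^{-1/2}}(1)=U_{h_i^{-1/2}}U_{a_i}(h_i^{-1}).
\]
Applying $U_{h_i^{1/2}}$ cancels the outer factor, so
\[
\Psi(y^2)=U_{h^{-1/2}}\big(tU_{a_1}(h_1^{-1})+(1-t)U_{a_2}(h_2^{-1})\big).
\]
By the same computation, $\Psi(y)^2=(U_{h^{-1/2}}(a))^2=U_{h^{-1/2}}U_a(h^{-1})$.

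Kadison's inequality now gives
\[
U_{h^{-1/2}}U_a(h^{-1})\leq U_{h^{-1/2}}\big(tU_{a_1}(h_1^{-1})+(1-t)U_{a_2}(h_2^{-1})\big).
\]
Applying the positive map $U_{h^{1/2}}=U_{h^{-1/2}}^{-1}$ preserves the order and yields the desired inequality
\[
U_a(h^{-1})\leq tU_{a_1}(h_1^{-1})+(1-t)U_{a_2}(h_2^{-1}).
\]

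For the second statement, Lemma \ref{lemma-trace-Ua} gives $\Phi(a,h)=\tau(a^2\circ h^{-1})=\tau(U_a(h^{-1}))$. Composing the operator-convex map with the positive linear functional $\tau$ preserves convexity, so $\Phi$ is jointly convex, and $\Phi\geq 0$ because $U_a(h^{-1})\geq 0$.

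The main point requiring care is the precise form of the Jordan Kadison inequality in \cite{RoY82}. We need it for unital positive maps defined on the $\JB$-algebra $\cal{A}\oplus\cal{A}$, and not merely for maps from a $\JB$-algebra into itself. If only the latter version is available, we would first extend $\Psi$ to a unital positive map from $\cal{A}\oplus\cal{A}$ to itself, for instance $(x_1,x_2)\mapsto(\Psi(x_1,x_2),\Psi(x_1,x_2))$, and read off the inequality in the first coordinate.
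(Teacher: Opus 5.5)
Your proposal is correct and follows essentially the same route as the paper: the same unital positive map $U_{h^{-1/2}}\big(\theta U_{h_1^{1/2}}(z_1)+(1-\theta)U_{h_2^{1/2}}(z_2)\big)$ on $\cal{A}\oplus\cal{A}$, the Jordan Kadison inequality of \cite{RoY82} evaluated at $z_i=U_{h_i^{-1/2}}(a_i)$, the fundamental formula to rewrite the squares, and Lemma~\ref{lemma-trace-Ua} to pass to $\Phi$. The only cosmetic difference is the order of steps: you rewrite both sides of Kadison's inequality with the fundamental formula before applying $U_{h^{1/2}}$, whereas the paper applies $U_{h^{1/2}}$ first and then uses the identity $U_{k^{1/2}}[(U_{k^{-1/2}}(b))^2]=U_b(k^{-1})$.
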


\begin{proof}
Let $0 < \theta < 1$, let $a_1,a_2 \in \cal{A}$ and let $h_1,h_2 \in \cal{A}_{++}$. Put
\begin{equation}
\label{def-de-a-et-h}
a
\ov{\mathrm{def}}{=}
\theta a_1+(1-\theta)a_2
\quad\text{and}\quad
h
\ov{\mathrm{def}}{=}
\theta h_1+(1-\theta)h_2.
\end{equation}
Since $\cal{A}_{++}$ is convex, we have $h \in \cal{A}_{++}$. Consider the linear map $T \co \cal{A} \oplus \cal{A} \to \cal{A}$ defined by
\begin{equation}
\label{def-de-T}
T(z_1,z_2)
\ov{\mathrm{def}}{=}
U_{h^{-\frac{1}{2}}}\Big(\theta U_{h_1^{\frac{1}{2}}}(z_1)+(1-\theta)U_{h_2^{\frac{1}{2}}}(z_2)\Big).
\end{equation}
The map $T$ is positive. Moreover, we have
\begin{align*}
\MoveEqLeft
T(1,1)
\ov{\eqref{def-de-T}}{=}U_{h^{-\frac{1}{2}}}\Big(\theta U_{h_1^{\frac{1}{2}}}(1)+(1-\theta)U_{h_2^{\frac{1}{2}}}(1)\Big)
\ov{\eqref{Ua-de-1}}{=} U_{h^{-\frac{1}{2}}}\big(\theta h_1+(1-\theta)h_2\big) \\
&\ov{\eqref{def-de-a-et-h}}{=} U_{h^{-\frac{1}{2}}}(h) 
=U_{h^{-\frac12}}U_{h^{\frac12}}(1)
\ov{\eqref{Ua-de-1}}{=} 1.
\end{align*}
Thus $T$ is unital. By the Kadison inequality for positive unital maps between $\JB$-algebras \cite[Theorem 1.2 p.~365]{RoY82}, we have
\begin{equation}
\label{Kadison-Albert}
T(z_1,z_2)^2
\leq
T(z_1^2,z_2^2),
\qquad z_1,z_2 \in \cal{A}.
\end{equation}
Set $
z_1
\ov{\mathrm{def}}{=} U_{h_1^{-\frac{1}{2}}}(a_1)$ and $z_2
\ov{\mathrm{def}}{=}
U_{h_2^{-\frac{1}{2}}}(a_2)$. Since $U_{h_i^{\frac{1}{2}}}U_{h_i^{-\frac{1}{2}}}=\Id_\cal{A}$ for $i=1,2$, we have
\[
T(z_1,z_2)
\ov{\eqref{def-de-T}}{=} U_{h^{-\frac{1}{2}}}\Big(\theta U_{h_1^{\frac{1}{2}}}(z_1)+(1-\theta)U_{h_2^{\frac{1}{2}}}(z_2)\Big)
=U_{h^{-\frac{1}{2}}}(\theta a_1+(1-\theta)a_2)
\ov{\eqref{def-de-a-et-h}}{=} U_{h^{-\frac{1}{2}}}(a).
\]
Applying the positive map $U_{h^{\frac{1}{2}}} \co \cal{A} \to \cal{A}$ to \eqref{Kadison-Albert}, we obtain
\begin{align}
\MoveEqLeft
U_{h^{\frac{1}{2}}}\Big[\big(U_{h^{-\frac{1}{2}}}(a)\big)^2 \Big]
\leq
\theta U_{h_1^{\frac{1}{2}}}\Big[\big(U_{h_1^{-\frac{1}{2}}}(a_1)\big)^2\Big]
+(1-\theta)U_{h_2^{\frac{1}{2}}}\Big[\big(U_{h_2^{-\frac{1}{2}}}(a_2)\big)^2\Big].
\label{perspective-order-Albert}
\end{align}
We claim that for any $k \in \cal{A}_{++}$ and any $b \in \cal{A}$, we have
\begin{equation}
\label{identity-perspective-Albert}
U_{k^{\frac{1}{2}}}\Big[\big(U_{k^{-\frac{1}{2}}}(b)\big)^2\Big]
=U_{b}(k^{-1}).
\end{equation}
Indeed, using $c^2 \ov{\eqref{Ua-de-1}}{=}U_c(1)$ and the fundamental formula \eqref{fundamental-formula-P-Albert}, we obtain
\begin{align*}
U_{k^{\frac12}}\Big[\big(U_{k^{-\frac12}}(b)\big)^2\Big]
=U_{k^{\frac12}}U_{U_{k^{-\frac12}}(b)}(1)
\ov{\eqref{fundamental-formula-P-Albert}}{=} U_{k^{\frac12}}U_{k^{-\frac12}}U_bU_{k^{-\frac12}}(1)
\ov{\eqref{Ua-de-1}}{=} U_b(k^{-1}),
\end{align*}
where we used $U_{k^{\frac12}}U_{k^{-\frac12}}=\Id_{\cal A}$ in the last equality. Consequently, \eqref{perspective-order-Albert} becomes
\begin{equation}
\label{perspective-order-Albert-2}
U_{a}(h^{-1})
\leq \theta U_{a_1}(h_1^{-1}) + (1-\theta)U_{a_2}(h_2^{-1}).
\end{equation}
This proves the joint convexity of the map $(a,h) \mapsto U_a(h^{-1})$. 
Lemma \ref{lemma-trace-Ua} gives the equalities $\tau(U_{a}(h^{-1})) = \tau(a^2 \circ h^{-1}) = \Phi(a,h)$, $\tau(U_{a_1}(h_1^{-1})) = \tau(a_1^2 \circ h_1^{-1}) = \Phi(a_1,h_1)$ and $\tau(U_{a_2}(h_2^{-1})) = \tau(a_2^2 \circ h_2^{-1}) = \Phi(a_2,h_2)$. Consequently, applying $\tau$ to \eqref{perspective-order-Albert-2}, we conclude that
\[
\Phi(a,h)
\leq \theta \Phi(a_1,h_1) + (1-\theta) \Phi(a_2,h_2).
\]
Thus $\Phi$ is jointly convex.
\end{proof}

\paragraph{Spectral theory}
Let $\cal{M}$ be a finite-dimensional $\JB^*$-algebra. For our purposes, the key point is that selfadjoint elements admit a spectral decomposition into minimal projections, exactly as in the associative matrix setting. Indeed, by \cite[p.~44]{FaK94} (see also \cite[Theorem III.1.1 p.~43]{FaK94} and \cite[Proposition 2.2 p.~6]{HKP23}), every selfadjoint element $x \in \cal{M}$ admits a spectral decomposition of the form
\begin{equation}
\label{spectral-decomposition}
x
=\lambda_1(x) p_1+\dots+\lambda_n(x) p_n,
\end{equation}
where $p_1,\dots,p_n$ are mutually orthogonal minimal projections in $\cal{M}$ belonging to the unital subalgebra generated by $x$, with sum equal to $1$, and $\lambda_1(x),\dots,\lambda_n(x)$ are real numbers. 

\vspace{0.2cm}

We will use the following inequality.

\begin{lemma}
\label{lemma-Young-weighted}
Suppose $0 < q <1$. For any $\alpha \geq 0$ and any $\beta > 0$, we have
\begin{equation}
\label{scalar-Young-Albert}
\alpha^q
\leq q\alpha\beta^{-1}+(1-q)\beta^\frac{q}{1-q}.
\end{equation}
\end{lemma}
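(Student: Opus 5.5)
The plan is to reduce the inequality to the weighted arithmetic–geometric mean inequality (equivalently, Young's inequality), applied with a suitable scaling. The case $\alpha=0$ is immediate: the left-hand side vanishes, while the right-hand side equals $(1-q)\beta^{\frac{q}{1-q}}\geq 0$. So I will assume $\alpha>0$ and $\beta>0$.

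I will use the weighted AM–GM inequality $x^{q}y^{1-q}\leq qx+(1-q)y$ for $x,y>0$ and $0<q<1$, which follows from the concavity of $\log$. The exponents are chosen so that the product of the two terms is exactly $\alpha^q$:
\[
x \ov{\mathrm{def}}{=} \alpha\beta^{-1},
\qquad
y \ov{\mathrm{def}}{=} \beta^{\frac{q}{1-q}}.
\]
With these choices,
\[
x^q y^{1-q}=\alpha^q\beta^{-q}\,\beta^{q}=\alpha^q.
\]
Substituting into the AM–GM inequality then yields \eqref{scalar-Young-Albert} directly.

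As a cross-check, I would also view the right-hand side as a function of $\beta$ and minimize it by calculus. Its derivative in $\beta$ is
\[
-q\alpha\beta^{-2}+q\beta^{\frac{q}{1-q}-1}.
\]
This vanishes when $\beta^{\frac{1}{1-q}}=\alpha$, that is, at $\beta=\alpha^{1-q}$. At that point the right-hand side equals $q\alpha^{q}+(1-q)\alpha^{q}=\alpha^q$. So the bound is attained, which is the fact behind the Carlen–Lieb-type variational formula used later.

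There is no real obstacle here. The only delicate point is choosing the exponents so that the geometric mean collapses exactly to $\alpha^q$.
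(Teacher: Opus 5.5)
Your proof is correct and is the same as the paper's: the paper applies the weighted AM--GM inequality $x^q y^{1-q}\leq qx+(1-q)y$ with exactly $x=\alpha\beta^{-1}$ and $y=\beta^{\frac{q}{1-q}}$, so that $x^q y^{1-q}=\alpha^q$. Your separate treatment of $\alpha=0$ and your calculus check showing equality at $\beta=\alpha^{1-q}$ are harmless extras; the latter corresponds to the paper's later choice $h_\epsi=(u+\epsi 1)^{1-q}$.
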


\begin{proof}
It suffices to apply the inequality $x^qy^{1-q} \leq qx+(1-q)y$ of \cite[p.~23]{Ste04} with $x=\alpha\beta^{-1}$ and $y=\beta^\frac{q}{1-q}$, since
\[
x^q y^{1-q}
=(\alpha\beta^{-1})^q(\beta^\frac{q}{1-q})^{1-q}
=\alpha^q.
\]
\end{proof}

The next result is a Jordan  analogue of the Carlen--Lieb variational formula \cite[Lemma 2.2 p.~115]{CaL08}. The proof below follows a reduction principle already used by Iochum in \cite[p.~431]{Ioc86}. Namely, given spectral decompositions $u=\sum_{i=1}^r\alpha_i c_i$ and $h=\sum_{j=1}^s \beta_j d_j$, the coefficients $m_{ij}\ov{\mathrm{def}}{=}\tau(c_i\circ d_j)$ are nonnegative and may be regarded as the masses of a positive measure on the product of the two finite spectra. This allows one to transfer scalar inequalities to the Jordan setting by applying them pointwise to each pair $(\alpha_i,\beta_j)$ and then summing with weights $m_{ij}$. 
This is a finite-dimensional instance of Iochum's observation that inequalities on measure spaces involving integrals of products of positive elements can be transferred to tracial nonassociative $\L^p$-spaces.

\begin{prop}
\label{prop-Albert-variational-fractional-power}
Let $\cal{A}$ be a finite-dimensional $\JBW$-algebra equipped with a normalized faithful trace $\tau$. Let $0 < q < 1$. For any $u \in \cal{A}_+$, we have
\begin{equation}
\label{variational-fractional-power-Albert}
\tau(u^q)
=
\inf_{h \in \cal{A}_{++}}
\left\{ q \tau(u \circ h^{-1})+(1-q)\tau\big(h^\frac{q}{1-q}\big) \right\}.
\end{equation}
\end{prop}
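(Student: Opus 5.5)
The identity will follow from two inequalities: a lower bound valid for every $h \in \cal{A}_{++}$, and an explicit $h$ that attains (or approaches) it.

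\emph{Lower bound.} Fix $u \in \cal{A}_+$ and $h \in \cal{A}_{++}$, with spectral decompositions \eqref{spectral-decomposition}
$$
u=\sum_{i=1}^r \alpha_i c_i,\qquad h=\sum_{j=1}^s \beta_j d_j ,
$$
where $\alpha_i \geq 0$ and $\beta_j>0$, and each of $(c_i)$, $(d_j)$ consists of mutually orthogonal projections summing to $1$. Set $m_{ij}\ov{\mathrm{def}}{=}\tau(c_i\circ d_j)$.

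First, $m_{ij} \geq 0$. By Lemma \ref{lemma-trace-Ua}, $\tau(c_i\circ d_j)=\tau(c_i^2\circ d_j)=\tau(U_{c_i}(d_j))$, and this is nonnegative because $U_{c_i}$ is a positive map.

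Second, the four traces in \eqref{variational-fractional-power-Albert} can be written in terms of the $m_{ij}$. Functional calculus gives $u^q=\sum_i\alpha_i^q c_i$, $h^{-1}=\sum_j\beta_j^{-1}d_j$ and $h^{q/(1-q)}=\sum_j\beta_j^{q/(1-q)}d_j$. Using $\sum_j d_j=\sum_i c_i=1$ and bilinearity, we get
$$
\tau(u^q)=\sum_{i,j}m_{ij}\alpha_i^q,\qquad
\tau(u\circ h^{-1})=\sum_{i,j}m_{ij}\alpha_i\beta_j^{-1},\qquad
\tau\big(h^{\frac q{1-q}}\big)=\sum_{i,j}m_{ij}\beta_j^{\frac q{1-q}}.
$$

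Third, apply Lemma \ref{lemma-Young-weighted} to each pair $(\alpha_i,\beta_j)$, multiply by $m_{ij}\geq 0$, and sum over $i,j$. This gives $\tau(u^q)\leq q\tau(u\circ h^{-1})+(1-q)\tau(h^{q/(1-q)})$ for every $h\in\cal{A}_{++}$. This is Iochum's transfer principle described before the statement.

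\emph{Attainment.} Equality holds in Lemma \ref{lemma-Young-weighted} exactly when $\alpha\beta^{-1}=\beta^{q/(1-q)}$, that is, when $\beta=\alpha^{1-q}$. So the natural candidate is $h=u^{1-q}$. If $u$ is invertible, then $h\in\cal{A}_{++}$ and $h$ lies in the associative subalgebra generated by $u$. In that subalgebra, $u\circ h^{-1}=u^q$ and $h^{q/(1-q)}=u^q$, so the right-hand side equals $q\tau(u^q)+(1-q)\tau(u^q)=\tau(u^q)$, and the infimum is a minimum.

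If $u$ is not invertible, I will take $h_\epsi=(u+\epsi 1)^{1-q}$ for $\epsi>0$. Everything lives in the associative subalgebra generated by $u$, where
$$
q\tau\big(u\circ(u+\epsi1)^{q-1}\big)+(1-q)\tau\big((u+\epsi 1)^q\big)=\sum_i m_i\Big[q\alpha_i(\alpha_i+\epsi)^{q-1}+(1-q)(\alpha_i+\epsi)^q\Big],
$$
with $m_i=\tau(c_i)$. As $\epsi\to0$ each bracket tends to $\alpha_i^q$, including when $\alpha_i=0$ since then the first term vanishes. Hence the infimum is at most $\tau(u^q)$, which combined with the lower bound proves \eqref{variational-fractional-power-Albert}.

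The main point requiring care is the positivity of $m_{ij}$, since $c_i$ and $d_j$ need not operator commute. The tracial identity together with positivity of $U_{c_i}$ handles it, as shown above. Apart from that, the only thing to check is that functional calculus stays inside the associative subalgebra generated by a single element, which the cited spectral theorem \eqref{spectral-decomposition} provides.
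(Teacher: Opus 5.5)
Your proof is correct and follows the paper's argument essentially step by step. It uses the same masses $m_{ij}=\tau(c_i\circ d_j)$, the same pointwise application of Lemma \ref{lemma-Young-weighted} weighted by $m_{ij}$ for the lower bound, and the same approximants $h_\epsi=(u+\epsi 1)^{1-q}$ for the reverse inequality. The only differences are minor: you prove $m_{ij}\geq 0$ directly via $\tau(c_i\circ d_j)=\tau(U_{c_i}(d_j))$ and positivity of $U_{c_i}$ (where the paper cites Iochum), and you add the observation that the infimum is attained at $h=u^{1-q}$ when $u$ is invertible.
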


\begin{proof}
We let $\gamma \ov{\mathrm{def}}{=} \frac{q}{1-q}$. Consider some spectral decompositions
\begin{equation}
\label{spectral-decompositions}
u
=\sum_{i=1}^r \alpha_i c_i
\quad\text{and}\quad
h
=\sum_{j=1}^s \beta_j d_j
\end{equation}
of the positive elements $u$ and $h$, where $(c_i)_{i=1}^r$ and $(d_j)_{j=1}^s$ have sum 1, $\alpha_i \geq 0$, and $\beta_j>0$. For any integers $1 \leq i \leq r$ and $1 \leq j \leq s$, set
\begin{equation}
\label{def-mij}
m_{ij}
\ov{\mathrm{def}}{=} \tau(c_i\circ d_j).
\end{equation}
Since $c_i,d_j \in \cal A_+$, we have $m_{ij} \geq 0$ by \cite[Lemma 1 (v) p.~418]{Ioc86}. Moreover, we have
\begin{equation}
\label{un-calcul}
\sum_{j=1}^s m_{ij}
\ov{\eqref{def-mij}}{=} \tau\bigg(c_i \circ \sum_{j=1}^s d_j\bigg)
=\tau(c_i)
\quad\text{and}\quad
\sum_{i=1}^r m_{ij}
\ov{\eqref{def-mij}}{=} \tau\bigg(\sum_{i=1}^r c_i \circ d_j\bigg)
=\tau(d_j).
\end{equation}
Consequently, we have
\begin{equation}
\label{tau-uq-123}
\tau(u^q)
\ov{\eqref{spectral-decompositions}}{=} \tau\bigg(\bigg(\sum_{i=1}^r\alpha_i c_i\bigg)^q\bigg)
=\sum_{i=1}^r\alpha_i^q\tau(c_i)
\ov{\eqref{un-calcul}}{=} \sum_{i=1}^r\sum_{j=1}^s m_{ij}\alpha_i^q.
\end{equation}
Similarly, we have
\begin{align*}
\tau(u \circ h^{-1})
&\ov{\eqref{spectral-decompositions}}{=} \sum_{i=1}^r\sum_{j=1}^s\alpha_i\beta_j^{-1} \tau\big(  c_i \circ d_j\big)
\ov{\eqref{def-mij}}{=} \sum_{i=1}^r\sum_{j=1}^s m_{ij}\alpha_i\beta_j^{-1}
\end{align*}
and
\begin{align*}
\tau(h^\gamma)
\ov{\eqref{spectral-decompositions}}{=}\tau\bigg(\bigg(\sum_{j=1}^s \beta_j d_j\bigg)^\gamma\bigg)
&=\sum_{j=1}^s\beta_j^\gamma\tau(d_j)
\ov{\eqref{un-calcul}}{=} \sum_{i=1}^r\sum_{j=1}^s m_{ij}\beta_j^\gamma.
\end{align*}
Now, we use the inequality with $\alpha_i$ instead of $\alpha$ and $\beta_j$ instead of $\beta$, we obtain
$$
\alpha_i^q
\ov{\eqref{scalar-Young-Albert}}{\leq} q\alpha_i\beta_j^{-1}+(1-q)\beta_j^\frac{q}{1-q}, \quad 1 \leq i \leq r, 1 \leq j \leq s.
$$
Multiplying this inequality by $m_{ij}$ and summing over $i$ and $j$, we obtain
\[
\tau(u^q)
\leq q\tau(u\circ h^{-1})+(1-q)\tau(h^\gamma).
\]
Taking the infimum over $h\in\cal A_{++}$ gives
\[
\tau(u^q)
\leq
\inf_{h\in\cal A_{++}}
\left\{q\tau(u\circ h^{-1})+(1-q)\tau(h^\gamma)\right\}.
\]
For the reverse inequality, let $\epsi > 0$ and set $
h_\epsi
\ov{\mathrm{def}}{=}
(u+\epsi 1)^{1-q} \in \cal A_{++}$. Then
\[
h_\epsi^{-1}
=(u+\epsi1)^{q-1}
\quad\text{and}\quad
h_\epsi^\gamma
=(u+\epsi1)^q.
\]
Since $h_\epsi$ belongs to the associative Jordan subalgebra generated by $u$ and $1$ and since we have the spectral decomposition $u=\sum_{i=1}^r\alpha_i c_i$ (hence $u+\epsi1=\sum_{i=1}^r (\alpha_i+\epsi) c_i$), the spectral functional calculus gives
\[
h_\epsi^{-1}
=\sum_{i=1}^r(\alpha_i+\epsi)^{q-1}c_i
\quad\text{and}\quad
h_\epsi^\gamma
=\sum_{i=1}^r(\alpha_i+\epsi)^q c_i.
\]
Using $c_i \circ c_j = \delta_{ij}c_i$, we obtain
\[
u \circ h_\epsi^{-1}
=\bigg(\sum_{i=1}^r\alpha_i c_i\bigg) \circ\bigg(\sum_{i=1}^r(\alpha_i+\epsi)^{q-1}c_i\bigg)
=\sum_{i=1}^r\alpha_i(\alpha_i+\epsi)^{q-1}c_i.
\]
Hence, by linearity of $\tau$, we obtain
\begin{align*}
\MoveEqLeft
q\tau(u\circ h_\epsi^{-1})+(1-q)\tau(h_\epsi^\gamma)
=\sum_{i=1}^r
\left[
q\alpha_i(\alpha_i+\epsi)^{q-1}
+(1-q)(\alpha_i+\epsi)^q
\right]\tau(c_i).
\end{align*}
For each $1 \leq i \leq r$, the expression inside the brackets converges to $\alpha_i^q$ as $\epsi \to 0$. Hence
\[
q\tau(u\circ h_\epsi^{-1})+(1-q)\tau(h_\epsi^\gamma)
\xra[\epsi \to 0]{}
\sum_{i=1}^r\alpha_i^q\tau(c_i)
\ov{\eqref{tau-uq-123}}{=} \tau(u^q).
\]
This proves the reverse inequality and completes the proof.
\end{proof}

\begin{remark} \normalfont
\label{remark-Lieb}
In the associative finite-dimensional setting, the variational identity \eqref{variational-fractional-power-Albert} is a particular case, after a change of variables, of the variational formula of Carlen and Lieb \cite[Lemma 2.2 p.~115]{CaL08} \cite[Lemma 10.28 p.~349]{Car25}, which gives, for a positive matrix $A \in \M_n$, $0 < q < 1$, $r \ov{\mathrm{def}}{=} \frac{p}{q} > 1$, the variational formula
$$
\tr\left[(A^{\frac{p}{2}}BB^*A^{\frac{p}{2}})^{\frac{q}{p}}\right]
=\frac{1}{r} \inf_{X > 0} \tr\left[A^{\frac{p}{2}}B\frac{1}{X^{r-1}}B^*A^{\frac{p}{2}} +(r-1)X\right].
$$
Taking $p=1$, $A=u$ and $B=\I$ in their formula gives
\[
\tr(u^q)
=\frac1r \inf_{X > 0}
\left\{ \tr\big(uX^{1-r}\big)+(r-1)\tr(X) \right\}.
\]
Since $\frac1r=q$ and $1-r=-\frac{1-q}{q}$, this can be rewritten as
\[
\tr(u^q)
=\inf_{X > 0} \left\{q\tr\big(uX^{-\frac{1-q}{q}}\big)+(1-q)\tr(X) \right\}.
\]
Making the change of variables $
X
=h^{\frac{q}{1-q}}$, we obtain
\[
\tr(u^q)
=
\inf_{h > 0}
\left\{q\tr(uh^{-1})+(1-q)\Tr\big(h^{\frac{q}{1-q}}\big) \right\}.
\]
Thus \eqref{variational-fractional-power-Albert} can be viewed as a Jordan analogue of the Carlen--Lieb variational formula.
\end{remark}

\begin{remark} \normalfont
The case $q=\frac12$ of \eqref{variational-fractional-power-Albert} is closely related to the variational formula \cite[Proposition I.2 (1)]{FJR16}. Indeed, in this case $\gamma=\frac{q}{1-q}=1$ and \eqref{variational-fractional-power-Albert} becomes
\[
\tau(u^{\frac12})
\ov{\eqref{variational-fractional-power-Albert}}{=} \frac12\inf_{h\in\cal{A}_{++}}\left\{\tau(u\circ h^{-1})+\tau(h)\right\}.
\]
If $\cal{A}$ is associative, let $a$ be a positive invertible element and apply this formula with $u=a^2$. Then
\[
\tau(a)
=\frac12 \inf_{h > 0}\left\{\tau(a^2h^{-1})+\tau(h)\right\}.
\]
Making the change of variables $h=a^{\frac12} y a^{\frac12}$, which is a bijection of the positive invertible cone onto itself, and using the tracial property, we obtain
\[
\tau(h)=\tau(a^{\frac12}ya^{\frac12})
=\tau(ay)
\quad\text{and}\quad
\tau(a^2h^{-1})=\tau(a^2a^{-\frac12} y^{-1} a^{-\frac12})
=\tau(ay^{-1}).
\]
Consequently, we have
\[
\tau(a)
=\frac12\inf_{y>0}\left\{\tau(ay)+\tau(ay^{-1})\right\},
\]
which is exactly \cite[Proposition I.2 (1)]{FJR16}. 
\end{remark}

We will use the following lemma.

\begin{lemma}
\label{lemma-convexity}
Suppose that $1 \leq p < \infty$. Let $\cal{A}$ be a $\JBW$-algebra equipped with a normal finite faithful trace $\tau$. The map $\cal A_{++} \to [0,\infty)$, $h \mapsto \tau(h^p)$ is convex on $\cal{A}_+$. 
\end{lemma}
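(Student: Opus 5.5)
The plan is to derive convexity of $h \mapsto \tau(h^p)$ from Iochum's Minkowski inequality in the real space $\L^{p,I}(\cal A)$, using that on positive elements the functional is the $p$-th power of a norm. For $h \in \cal A_+$ we have $|h| = (h\circ h)^{\frac12} = h$, so by \eqref{Iochum-introduction}
\[
\tau(h^p) = \norm{h}_{\L^{p,I}(\cal A)}^p .
\]
Since $1 \leq p < \infty$, Iochum's theory \cite{Ioc86} tells us that $\norm{\cdot}_{\L^{p,I}(\cal A)}$ is a norm on $\cal A$ (this is where the hypothesis that $\tau$ is a normal finite faithful trace enters). Hence $h \mapsto \norm{h}_{\L^{p,I}(\cal A)}$ is convex on $\cal A$: for $0\leq \theta\leq 1$ and $h_1,h_2 \in \cal A_+$, Minkowski's inequality and homogeneity give
\[
\norm{\theta h_1+(1-\theta)h_2}_{\L^{p,I}(\cal A)} \leq \theta \norm{h_1}_{\L^{p,I}(\cal A)} + (1-\theta)\norm{h_2}_{\L^{p,I}(\cal A)}.
\]

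The second step composes this with the scalar map $t \mapsto t^p$, which is convex and nondecreasing on $[0,\infty)$ for $p\geq 1$. Write $N(h)=\norm{h}_{\L^{p,I}(\cal A)}$. Because $\cal A_+$ is a convex cone, $\theta h_1 + (1-\theta)h_2 \in \cal A_+$, and its image under the functional is $N(\theta h_1+(1-\theta)h_2)^p$. We then bound
\[
\tau\big((\theta h_1+(1-\theta)h_2)^p\big) = N(\theta h_1+(1-\theta)h_2)^p \leq \big(\theta N(h_1)+(1-\theta)N(h_2)\big)^p \leq \theta \tau(h_1^p)+(1-\theta)\tau(h_2^p).
\]
The first inequality uses monotonicity of $t\mapsto t^p$ together with Step 1, and the second uses its convexity. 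This proves convexity on $\cal A_+$, and a fortiori on the convex subset $\cal A_{++}$.

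The only point that needs care is invoking the triangle inequality for Iochum's norm in the stated generality. If one prefers to avoid it, an alternative route in the finite-dimensional case (all that is needed for the Albert algebra) is a Jordan Peierls-type argument. Take the spectral decomposition $h=\sum_j \beta_j d_j$ of $h=\theta h_1+(1-\theta)h_2$. Then $\tau(h^p)=\sum_j \tau(h\circ d_j)^p/\tau(d_j)^{p-1}$, because $\tau(h\circ d_j)=\beta_j\tau(d_j)$. Writing $\tau(h\circ d_j)=\theta\tau(h_1\circ d_j)+(1-\theta)\tau(h_2\circ d_j)$ and using convexity of $t\mapsto t^p$, we are reduced to
\[
\sum_j \tau(h_i\circ d_j)^p\,\tau(d_j)^{1-p}\leq \tau(h_i^p).
\]
This last bound follows from the weights $m_{kj}=\tau(c_k\circ d_j)$ of Proposition \ref{prop-Albert-variational-fractional-power} together with Jensen's inequality. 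I expect the first route to suffice, with the main obstacle being just the correct citation of Minkowski's inequality in $\L^{p,I}(\cal A)$.
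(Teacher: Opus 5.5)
Your main argument is correct and is exactly the paper's proof. On $\cal A_+$ one has $\tau(h^p)=\norm{h}_{\L^{p,I}(\cal A)}^p$, and the triangle inequality in Iochum's $\L^{p,I}(\cal A)$, followed by the monotonicity and convexity of $t\mapsto t^p$, gives the claim; your optional finite-dimensional Peierls-type argument (weights $m_{kj}=\tau(c_k\circ d_j)$ plus Jensen's inequality) is also valid, though not needed.
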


\begin{proof}
For any $h_1,h_2 \in \cal{A}_{+}$ and any $\theta \in [0,1]$, the triangle inequality in $\L^{p,I}(\cal A)$ and the scalar convexity of the function $t \mapsto t^p$ give
\begin{align*}
\MoveEqLeft
\tau\big((\theta h_1+(1-\theta)h_2)^p\big)
\ov{\eqref{Iochum-introduction}}{=}\norm{\theta h_1+(1-\theta)h_2}_{\L^{p,I}(\cal A)}^p 
\leq \big(\theta\norm{h_1}_{\L^{p,I}(\cal A)}+(1-\theta)\norm{h_2}_{\L^{p,I}(\cal A)}\big)^p \\
&\leq \theta\norm{h_1}_{\L^{p,I}(\cal A)}^p+(1-\theta)\norm{h_2}_{\L^{p,I}(\cal A)}^p
\ov{\eqref{Iochum-introduction}}{=} \theta\tau(h_1^p)+(1-\theta)\tau(h_2^p).
\end{align*}
\end{proof}

Now, we have collected the ingredients required to prove the triangle inequality in the exceptional case. The two ranges of $p$ exhibit rather different mechanisms. When $p\geq2$, Proposition~\ref{prop-connection} reduces the problem to the Banach norm of Iochum's $\L^{\frac{p}{2},I}(\cal{A})$-space, and a direct order estimate is sufficient. When $1 \leq p < 2$, the exponent $\frac{p}{2}$ lies below one, so this argument breaks down. In that range, the variational formula together with the joint convexity established above will imply the convexity of the $p$-th power of the candidate norm. Its $p$-homogeneity will then recover the triangle inequality. This yields the following main result for the complexified Albert algebra.

\begin{thm}
\label{thm-spectral-norm-complexified-Albert}
Let $\cal{M}=\H_3(\O_{\mathbb{C}})$ be equipped with its normalized trace $\tau$. Suppose that $1 \leq p < \infty$. Then \eqref{norm-Lp-intro} defines a norm on $\cal{M}$.
\end{thm}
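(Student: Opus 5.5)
Positivity, faithfulness and homogeneity are straightforward. By Proposition~\ref{prop-connection}, $\norm{x}_{\L^p(\cal M)}^2=\norm{a^2+b^2}_{\L^{q,I}(\cal A)}$ with $q=\frac p2$. This vanishes only if $a^2+b^2=0$, and then $a=b=0$ by formal reality and faithfulness of $\tau$. For $\lambda\in\mathbb C$ we have $(\lambda x)^*\circ(\lambda x)=|\lambda|^2x^*\circ x$, so $\norm{\lambda x}=|\lambda|\norm{x}$. It remains to prove the triangle inequality for $x_1=a_1+\i b_1$ and $x_2=a_2+\i b_2$, and I split into two ranges of $p$.

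\emph{Case $p\geq 2$.} Put $a=a_1+a_2$ and $b=b_1+b_2$. For $t>0$, positivity of $(t^{1/2}a_1-t^{-1/2}a_2)^2$ gives $a^2\leq (1+t)a_1^2+(1+t^{-1})a_2^2$, and similarly for $b$. Hence
\[
0\leq a^2+b^2\leq (1+t)(a_1^2+b_1^2)+(1+t^{-1})(a_2^2+b_2^2).
\]
I will use that Iochum's norm is monotone on $\cal A_+$, which I will justify via the operator monotonicity of $s\mapsto s^q$ for $q\leq1$, or via the trace monotonicity $\tau(f(u))\leq\tau(f(v))$ for $u\leq v$ and increasing $f$. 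Since $q\geq1$, Minkowski in $\L^{q,I}(\cal A)$ then gives $\norm{x_1+x_2}^2\leq (1+t)N_1^2+(1+t^{-1})N_2^2$, where $N_i=\norm{x_i}_{\L^p}$. Choosing $t=N_2/N_1$ yields $(N_1+N_2)^2$.

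\emph{Case $1\leq p<2$, so $0<q<1$ (and $q\geq\frac12$).} Let $F(x)=\norm{x}^p_{\L^p}=\tau((a^2+b^2)^q)$. By Proposition~\ref{prop-Albert-variational-fractional-power} with $u=a^2+b^2$ and $\gamma=\frac q{1-q}=\frac{p}{2-p}\geq1$,
\[
F(x)=\inf_{h\in\cal A_{++}}G(a,b,h),\qquad G(a,b,h)=q\Phi(a,h)+q\Phi(b,h)+(1-q)\tau(h^\gamma),
\]
with $\Phi$ as in \eqref{def-de-Phi}. By Proposition~\ref{prop-Albert-quadratic-fractional}, $\Phi$ is jointly convex. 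Lemma~\ref{lemma-convexity} applies since $\gamma\geq1$, so $h\mapsto\tau(h^\gamma)$ is convex. Thus $G$ is jointly convex in $(a,b,h)$. The infimum over $h$ of a jointly convex function is convex, because given near-optimal $h_1,h_2$ the point $\theta h_1+(1-\theta)h_2$ is admissible. Hence $F$ is convex on $\cal M$, viewed as a real vector space.

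To pass from convexity to the triangle inequality, I use the standard homogeneity argument. If $N_1,N_2>0$, write
\[
x_1+x_2=(N_1+N_2)\Big(\theta\frac{x_1}{N_1}+(1-\theta)\frac{x_2}{N_2}\Big),\qquad \theta=\frac{N_1}{N_1+N_2}.
\]
Convexity gives $F\big(\theta \frac{x_1}{N_1}+(1-\theta)\frac{x_2}{N_2}\big)\leq1$, and $p$-homogeneity of $F$ then yields $\norm{x_1+x_2}\leq N_1+N_2$. The case where some $N_i=0$ is trivial.

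The main obstacle is the range $p<2$, where everything rests on combining the joint convexity of $\Phi$ with the variational formula. The only delicate point I expect is checking $\gamma\geq1$ so that Lemma~\ref{lemma-convexity} applies; this is exactly why $p\geq1$ is needed. A secondary point is the monotonicity of $\tau(u^q)$ in $u$ used in the case $p\geq2$.
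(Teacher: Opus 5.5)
Your proposal is correct and follows the paper's proof essentially step by step: the order estimate $a^2+b^2\leq(1+t)(a_1^2+b_1^2)+(1+t^{-1})(a_2^2+b_2^2)$ plus monotonicity and Minkowski in $\L^{q,I}(\cal A)$ for $p\geq2$; and for $1\leq p<2$, the variational formula, joint convexity of $\Phi$, convexity of $h\mapsto\tau(h^\gamma)$ with $\gamma\geq1$, convexity of the infimum, and the homogeneity argument. One small correction: in the case $p\geq 2$ you have $q\geq1$, so operator monotonicity of $s\mapsto s^q$ is unavailable, and monotonicity of Iochum's norm must come from your second option (trace monotonicity of increasing functions, e.g.\ via eigenvalue monotonicity), which is also what the paper implicitly relies on.
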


\begin{proof}
By \cite[Proposition 3.1]{ArL26}, the expression of \eqref{norm-Lp-intro} is absolutely homogeneous and point-separating. It remains to prove the triangle inequality. We first suppose that $2 \leq p < \infty$. Put $
q
\ov{\mathrm{def}}{=} \frac p2 \geq 1$. Consider some elements $x=a+\i b$ and $y=c+\i d$ in the $\JBW^*$-algebra $\cal{M}$. We have 
\begin{equation}
\label{inter-sans-fin-39}
x+y
=a+c+\i(b+d).
\end{equation}
Set $
U
\ov{\mathrm{def}}{=}
a^2+b^2$ and $V
\ov{\mathrm{def}}{=}
c^2+d^2$. For any $t > 0$, we have
\begin{align}
\label{inequality-37}
\MoveEqLeft
0
\leq
\big(\sqrt{t}a-t^{-1/2}c\big)^2
+
\big(\sqrt{t}b-t^{-1/2}d\big)^2 
=t(a^2+b^2)+t^{-1}(c^2+d^2)-2(a \circ c+b \circ d) \\
&=tU+t^{-1}V-2(a \circ c+b \circ d). \nonumber
\end{align}
Consequently, we obtain
\begin{align}
\MoveEqLeft
\label{inequality-38}
(a+c)^2+(b+d)^2
=a^2+b^2+c^2+d^2+2(a \circ c+b \circ d) \\
&=U+V+2(a \circ c+b \circ d)
\ov{\eqref{inequality-37}}{\leq}
(1+t)U+(1+t^{-1})V. \nonumber        
\end{align}
Using the monotonicity and the triangle inequality for the norm $\norm{\cdot}_{\L^{q,I}(\cal{A})}$, we obtain
\begin{align*}
\MoveEqLeft
\norm{x+y}_{\L^p(\H_3(\O_{\mathbb{C}}))}^2
\ov{\eqref{inter-sans-fin-39}}{=} \norm{a+c+\i(b+d)}_{\L^p(\H_3(\O_{\mathbb{C}}))}^2
\ov{\eqref{formule-utile}}{=} \norm{(a+c)^2+(b+d)^2}_{\L^{q,I}(\cal{A})} \\
&\ov{\eqref{inequality-38}}{\leq} \norm{(1+t)U+(1+t^{-1})V}_{\L^{q,I}(\cal{A})} 
 \leq (1+t)\norm{U}_{\L^{q,I}(\cal{A})} + (1+t^{-1})\norm{V}_{\L^{q,I}(\cal{A})} \\
&=(1+t)\norm{a^2+b^2}_{\L^{q,I}(\cal{A})} + (1+t^{-1})\norm{c^2+d^2}_{\L^{q,I}(\cal{A})} \\
&= (1+t)\norm{x}_{\L^p(\H_3(\O_{\mathbb{C}}))}^2 + (1+t^{-1})\norm{y}_{\L^p(\H_3(\O_{\mathbb{C}}))}^2.
\end{align*}
Suppose first that $\norm{x}_{\L^p(\H_3(\O_{\mathbb{C}}))} > 0$ and $\norm{y}_{\L^p(\H_3(\O_{\mathbb{C}}))} > 0$. Choosing $
t
\ov{\mathrm{def}}{=}
\frac{\norm{y}_{\L^p(\H_3(\O_{\mathbb{C}}))}}{\norm{x}_{\L^p(\H_3(\O_{\mathbb{C}}))}}$ gives
\[
\norm{x+y}_{\L^p(\H_3(\O_{\mathbb{C}}))}^2
\leq
\big(\norm{x}_{\L^p(\H_3(\O_{\mathbb{C}}))}+\norm{y}_{\L^p(\H_3(\O_{\mathbb{C}}))}\big)^2.
\]
Taking the square roots, we conclude that
\[
\norm{x+y}_{\L^p(\H_3(\O_{\mathbb{C}}))}
\leq \norm{x}_{\L^p(\H_3(\O_{\mathbb{C}}))}+\norm{y}_{\L^p(\H_3(\O_{\mathbb{C}}))}.
\]
If one of $\norm{x}_{\L^p(\H_3(\O_{\mathbb{C}}))}$ or $\norm{y}_{\L^p(\H_3(\O_{\mathbb{C}}))}$ is zero, the same conclusion follows from the point-separating property. Thus the triangle inequality holds when $p \geq 2$.

Now, we suppose that $1 \leq p < 2$. Put $
q
\ov{\mathrm{def}}{=}
\frac p2$ and $
\gamma
\ov{\mathrm{def}}{=}
\frac{q}{1-q}
=
\frac{p}{2-p}$. Note that $\gamma \geq 1$. Define
\begin{equation}
\label{def-de-Fp}
F_p(a,b)
\ov{\mathrm{def}}{=}
\tau\big((a^2+b^2)^q\big),
\qquad a,b \in \cal{A}.
\end{equation}
By Proposition \ref{prop-Albert-variational-fractional-power}, we have
\begin{align}
F_p(a,b)
=
\inf_{h \in \cal{A}_{++}}
\Big\{ q\Phi(a,h) + q\Phi(b,h) + (1-q)\tau(h^\gamma) \Big\}.
\label{variational-Fp-Albert}
\end{align}
By Proposition \ref{prop-Albert-quadratic-fractional}, the maps $(a,h) \mapsto \Phi(a,h)$ and $(b,h) \mapsto \Phi(b,h)$ are jointly convex. By Lemma \ref{lemma-convexity}, the map $ h\mapsto \tau(h^\gamma)$ is convex on $\cal A_{++}$. Thus the function
\[
G(a,b,h)
\ov{\mathrm{def}}{=}
q\Phi(a,h)+q\Phi(b,h)+(1-q)\tau(h^\gamma)
\]
is jointly convex on $\cal A \oplus \cal A \oplus \cal A_{++}$. Now, we show that $F_p$ is convex (see also \cite[p.~87-88]{BoV04}). Let $(a_1,b_1),(a_2,b_2)\in\cal A\oplus\cal A$, let $0<\theta<1$, and let $\epsi>0$. Choose $h_1,h_2\in\cal A_{++}$ such that
\[
G(a_i,b_i,h_i)
\leq F_p(a_i,b_i)+\epsi,
\qquad i=1,2.
\]
Since $\cal A_{++}$ is convex, $h \ov{\mathrm{def}}{=}\theta h_1+(1-\theta)h_2$ belongs to $\cal A_{++}$. Hence
\begin{align*}
\MoveEqLeft
F_p\big(\theta(a_1,b_1)+(1-\theta)(a_2,b_2)\big)
\leq G\big(\theta a_1+(1-\theta)a_2,\theta b_1+(1-\theta)b_2,h\big) \\
&\leq \theta G(a_1,b_1,h_1)+(1-\theta)G(a_2,b_2,h_2)
\leq \theta F_p(a_1,b_1)+(1-\theta)F_p(a_2,b_2)+\epsi.
\end{align*}
Letting $\epsi \to 0$ proves that $F_p$ is convex.

It follows that the expression inside the braces in \eqref{variational-Fp-Albert} is jointly convex in $(a,b,h)$. Taking the infimum with respect to $h \in \cal{A}_{++}$ shows that $F_p$ is convex on $\cal{A} \oplus \cal{A}$. Furthermore, we have
\begin{equation}
\label{homo-Fp}
F_p(ta,tb)
\ov{\eqref{def-de-Fp}}{=} t^p F_p(a,b),
\quad t \geq 0.
\end{equation}
Let $x,y \in \cal{M}$. Put $
\alpha
\ov{\mathrm{def}}{=} \norm{x}_{\L^p(\H_3(\O_{\mathbb{C}}))}
$ and $
\beta
\ov{\mathrm{def}}{=} \norm{y}_{\L^p(\H_3(\O_{\mathbb{C}}))}$. Assume first that $\norm{x}_{\L^p(\H_3(\O_{\mathbb{C}}))} > 0$ and $\norm{y}_{\L^p(\H_3(\O_{\mathbb{C}}))} > 0$. Identifying $\cal{M}$ with the real vector space $\cal{A} \oplus \cal{A}$, we write $F_p(x) \ov{\mathrm{def}}{=}F_p(a,b)$ whenever $x=a+\i b$. If $x=a+\i b$, we have
\begin{equation}
\label{Fp-et norm}
F_p(x)
=F_p(a+\i b)
\ov{\eqref{def-de-Fp}}{=} \tau\big((a^2+b^2)^q\big)
\ov{\eqref{Iochum-introduction}}{=} \norm{a^2+b^2}_{\L^{q,I}(\cal A)}^{q}
\ov{\eqref{formule-utile}}{=} \norm{x}_{\L^p(\cal{M})}^p.
\end{equation}
Then the convexity and the $p$-homogeneity of $F_p$ give
\begin{align*}
\MoveEqLeft
\frac{\norm{x+y}_{\L^p(\H_3(\O_{\mathbb C}))}^p}{(\alpha+\beta)^p}
=\frac{F_p(x+y)}{(\alpha+\beta)^p} 
\ov{\eqref{homo-Fp}}{=} F_p\left(\frac{x+y}{\alpha+\beta}\right) \\
&=F_p\left(\frac{\alpha}{\alpha+\beta}\frac{x}{\alpha}
+\frac{\beta}{\alpha+\beta}\frac{y}{\beta}\right)
\leq\frac{\alpha}{\alpha+\beta}F_p\left(\frac{x}{\alpha}\right)
+\frac{\beta}{\alpha+\beta}F_p\left(\frac{y}{\beta}\right) \\
&\ov{\eqref{homo-Fp}}{=} \frac{\alpha}{\alpha+\beta}\frac{F_p(x)}{\alpha^p}+\frac{\beta}{\alpha+\beta}\frac{F_p(y)}{\beta^p} \\
&\ov{\eqref{Fp-et norm}}{=} \frac{\alpha}{\alpha+\beta}\frac{\norm{x}_{\L^p(\H_3(\O_{\mathbb C}))}^p}{\alpha^p}
+\frac{\beta}{\alpha+\beta}\frac{\norm{y}_{\L^p(\H_3(\O_{\mathbb C}))}^p}{\beta^p}
=\frac{\alpha}{\alpha+\beta}+\frac{\beta}{\alpha+\beta} =1.
\end{align*}
We obtain
\[
\norm{x+y}_{\L^p(\H_3(\O_{\mathbb{C}}))}
\leq\alpha+\beta
=\norm{x}_{\L^p(\H_3(\O_{\mathbb{C}}))}
+\norm{y}_{\L^p(\H_3(\O_{\mathbb{C}}))}.
\]
As before, the cases $\alpha=0$ or $\beta=0$ follow from the point-separating property. Thus the triangle inequality also holds for $1 \leq p < 2$.
\end{proof}

\section{Case of the $\JBW^*$-algebra $\L^\infty(\Omega,\H_3(\O_{\mathbb{C}}))$}
\label{sec-Linfty}

We next pass from the exceptional factor to the measurable exceptional algebras which occur in the structure theory of general $\JBW^*$-algebras. Having established the triangle inequality on the fibre $\H_3(\O_{\mathbb C})$, the remaining issue is to understand how an arbitrary normal finite faithful trace on $\L^\infty(\Omega,\H_3(\O_{\mathbb C}))$ decomposes over the center. The uniqueness of the normalized trace on the Albert factor implies that all the freedom is carried by a scalar measure on $\Omega$. We show below that the trace therefore disintegrates as the integral of the normalized fibre trace. As a consequence, the spectral nonassociative $\L^p$-norm becomes a genuine Bochner space with values in the finite-dimensional Banach space $\L^p(\H_3(\O_{\mathbb C}))$.

\begin{prop}
\label{prop-traces-on-Linfty}
Let $(\Omega,\Sigma,\mu)$ be a localizable measure space. Assume that the $\JBW^*$-algebra $\cal{M}=\L^\infty(\Omega,\H_3(\O_{\mathbb{C}}))$ is equipped with a normal finite faithful trace $\tau$. Then there exists a finite measure $\nu$ on $\Omega$ equivalent to $\mu$ such that
\begin{equation}
\label{eq-trace-disintegration-Albert}
\tau(f)
=
\int_\Omega \tau_{\H_3(\O_{\mathbb{C}})}(f(\omega)) \d\nu(\omega),
\quad f \in \L^\infty(\Omega,\H_3(\O_{\mathbb{C}})),
\end{equation}
where $\tau_{\H_3(\O_{\mathbb{C}})}$ is the normalized trace on the $\JBW^*$-factor $\H_3(\O_{\mathbb{C}})$, and if $1 \leq p < \infty$ then
\begin{equation}
\label{eq-Lp-Bochner-Albert}
\norm{f}_{\L^p(\cal{M})}
=
\left( \int_\Omega \norm{f(\omega)}_{\L^p(\H_3(\O_{\mathbb{C}}))}^p \d\nu(\omega) \right)^{\frac1p}, \quad f \in \L^\infty(\Omega,\H_3(\O_{\mathbb{C}})).
\end{equation}
\end{prop}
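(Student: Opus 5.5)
The plan is to first determine $\tau$ on the center $\L^\infty(\Omega)\cdot 1$, then use uniqueness of the trace on the Albert factor fibrewise.

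\emph{Step 1: restriction to the center.} Identify $\L^\infty(\Omega)$ with the center $\Zc(\cal{M})$ via $g \mapsto g\cdot 1$. The functional $g \mapsto \tau(g 1)$ is normal, positive and faithful on $\L^\infty(\Omega)$, and it is finite. Since $(\Omega,\Sigma,\mu)$ is localizable, $\L^\infty(\Omega)=\L^1(\Omega)^*$, so this normal functional is given by a density $\rho \in \L^1(\Omega,\mu)$ with $\rho \geq 0$. Faithfulness forces $\rho>0$ almost everywhere. Set $\d\nu=\rho\,\d\mu$. Then $\nu$ is finite, equivalent to $\mu$, and
\[
\tau(g1)=\int_\Omega g \,\d\nu .
\]

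\emph{Step 2: disintegration.} Write $\H_3(\O_{\mathbb C})=\mathbb{C}1\oplus \ker \tau_{\H_3(\O_{\mathbb C})}$, with conditional expectation $E(f)(\omega)=\tau_{\H_3(\O_{\mathbb C})}(f(\omega))1$ onto the center. It suffices to show that $\tau(f)=0$ whenever $\tau_{\H_3(\O_{\mathbb C})}(f(\omega))=0$ for almost every $\omega$. Then $\tau=\tau\circ E$, which gives \eqref{eq-trace-disintegration-Albert} by Step 1.

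I would argue as follows. Since $\ker\tau_{\H_3(\O_{\mathbb C})}$ is finite-dimensional, pick a basis $e_1,\dots,e_{26}$ of it. Every such $f$ can be written as $f=\sum_k g_k e_k$ with $g_k\in \L^\infty(\Omega)$ measurable (coordinates relative to a fixed basis). It then suffices to show $\tau(g e_k)=0$ for each $g\in\L^\infty(\Omega)$ and each $k$.

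For fixed $g \geq 0$, the map $\varphi_g(z)=\tau(g z)$ with $z\in \H_3(\O_{\mathbb C})$ is a positive linear functional on the factor satisfying the trace identity \eqref{trace}. Here we use that $g$ is central, so $g\circ(z\circ w)=(g\circ z)\circ w$, and hence $\tau((gz)\circ(w\circ v))$-type identities transfer. By uniqueness of the normalized trace on the $\JBW$-factor $\H_3(\O)$ (a finite-dimensional simple formally real Jordan algebra has a unique trace up to scalar, namely the normalized one), we get
\[
\varphi_g=\varphi_g(1)\,\tau_{\H_3(\O_{\mathbb C})}.
\]
Consequently $\tau(g e_k)=0$. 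Extending by linearity from $g\geq0$ to all $g$ finishes the step.

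\emph{Step 3: the $\L^p$-norm.} For $f\in\cal{M}$, the operations $f^*\circ f$ and the continuous functional calculus $(f^*\circ f)^{p/2}$ are computed pointwise in $\L^\infty(\Omega,\H_3(\O_{\mathbb C}))$. I would check this first for simple functions, where it reduces to finitely many central projections $\chi_{E}1$, and then pass to general $f$ by norm approximation and continuity of the functional calculus. Applying \eqref{eq-trace-disintegration-Albert} to $(f^*\circ f)^{p/2}$ yields \eqref{eq-Lp-Bochner-Albert}.

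The main obstacle I expect is Step 2. It requires justifying that $\varphi_g$ is indeed a trace on the factor, and invoking the uniqueness of the trace on the Albert algebra. The latter follows, for instance, from the fact that any trace is invariant under the automorphism group, which acts transitively on minimal projections, so a trace takes the same value on all minimal projections and is therefore determined by the spectral decomposition \eqref{spectral-decomposition}.
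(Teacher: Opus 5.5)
Your proposal is correct and follows essentially the same route as the paper: you obtain $\nu$ from the restriction of $\tau$ to the center, observe that $z\mapsto\tau(gz)$ (the paper takes $g=\mathbf{1}_B$) is a positive trace on the Albert factor and hence a multiple of the normalized trace by uniqueness, and then use the pointwise functional calculus for the $\L^p$-formula. The differences are only technical: you get $\nu$ as an $\L^1$-density via localizability instead of setting $\nu(B)=\tau(\mathbf{1}_B1)$ directly, and you reach general $f$ through coordinates in a fixed basis of $\H_3(\O_{\mathbb C})$ rather than by approximating with simple functions.
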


\begin{proof}
For every measurable subset $B$ of $\Omega$, let $
\nu(B)
\ov{\mathrm{def}}{=}
\tau(\mathbf{1}_B1)$, where $\mathbf{1}_B1$ is the central projection of $\L^\infty(\Omega,\H_3(\O_{\mathbb{C}}))$ defined by
\[
(\mathbf{1}_B1)(\omega)
=
\mathbf{1}_B(\omega)1.
\]
Since the trace $\tau$ is normal, $\nu$ is a finite countably additive measure. Moreover, we have
\[
\nu(\Omega)
=
\tau(1)
<
\infty.
\]
Now, we show that $\nu$ and $\mu$ have the same null sets. Indeed, if $\nu(B)=0$, then $\tau(\mathbf{1}_B1)=0$. Since $\mathbf{1}_B1$ is positive and $\tau$ is faithful, we obtain $\mathbf{1}_B1=0$ in $\cal{M}$, hence $\mu(B)=0$. The converse is immediate. For a fixed measurable subset $B$ of $\Omega$, define the functional $\rho_B \co \H_3(\O_{\mathbb{C}}) \to \mathbb{C}$ by
\begin{equation}
\label{def-rho-B}
\rho_B(a)
\ov{\mathrm{def}}{=}
\tau(\mathbf{1}_Ba).
\end{equation}

Let $e\ov{\mathrm{def}}{=}\mathbf{1}_B1$. Since $e$ is central, we have
\[
(e\circ x)\circ y=e\circ(x\circ y), \qquad x,y\in\cal M.
\]
Moreover, $e\circ x=\mathbf{1}_Bx$. Hence, for any $a,b,c\in\H_3(\O_{\mathbb C})$,
\begin{align*}
\rho_B(a\circ(b\circ c))
&\ov{\eqref{def-rho-B}}{=}\tau\big(\mathbf{1}_B(a\circ(b\circ c))\big)
=\tau\big((\mathbf{1}_Ba)\circ(b\circ c)\big)\\
&\ov{\eqref{associative-trace}}{=}\tau\big(((\mathbf{1}_Ba)\circ b)\circ c\big)
=\tau\big(\mathbf{1}_B((a\circ b)\circ c)\big)
\ov{\eqref{def-rho-B}}{=}\rho_B((a\circ b)\circ c).
\end{align*}

So, the map $\rho_B$ is a positive finite trace on the $\JBW^*$-algebra $\H_3(\O_{\mathbb{C}})$. 
The restriction $\sigma_B\ov{\mathrm{def}}{=}\rho_B|_{\H_3(\O)}$ is a positive finite trace on $\H_3(\O)$. By \cite[Proposition 5.25 p.~152]{AlS03}, the algebra $\H_3(\O)$ admits a unique normalized trace, denoted by $\tau_{\H_3(\O)}$. If $\sigma_B(1)>0$, then $\frac{1}{\sigma_B(1)}\sigma_B$ is a normalized trace, and therefore
\[
\sigma_B
=\sigma_B(1)\tau_{\H_3(\O)}.
\]
If $\sigma_B(1)=0$, we have $\sigma_B=0$, so the same equality still holds. Since
\[
\sigma_B(1)=\rho_B(1)
\ov{\eqref{def-rho-B}}{=}\tau(\mathbf{1}_B1)
=\nu(B),
\]
and since $\rho_B$ and $\tau_{\H_3(\O_{\mathbb C})}$ are the complex-linear extensions of their restrictions to $\H_3(\O)$, we obtain
\[
\rho_B
=\nu(B)\tau_{\H_3(\O_{\mathbb C})}.
\]
Consequently, for any measurable subset $B$, we have
\begin{equation}
\label{eq-simple-fibre-trace}
\tau(\mathbf{1}_Ba)
=\nu(B)\tau_{\H_3(\O_{\mathbb C})}(a),
\quad a \in \H_3(\O_{\mathbb C}).
\end{equation}
Let $f = \sum_{k=1}^n \mathbf{1}_{B_k}a_k$ be a simple $\H_3(\O_{\mathbb{C}})$-valued function, where the sets $B_1,\ldots,B_n$ are pairwise disjoint. By \eqref{eq-simple-fibre-trace}, we obtain
\begin{align}
\MoveEqLeft
\label{tau-f}
\tau(f)
=\tau\bigg(\sum_{k=1}^n \mathbf{1}_{B_k}a_k\bigg)
=\sum_{k=1}^n \tau(\mathbf{1}_{B_k}a_k)
\ov{\eqref{eq-simple-fibre-trace}}{=}\sum_{k=1}^n \nu(B_k) \tau_{\H_3(\O_{\mathbb{C}})}(a_k) \\
&=\sum_{k=1}^n \tau_{\H_3(\O_{\mathbb{C}})}(a_k)\int_\Omega \mathbf{1}_{B_k}(\omega) \d\nu(\omega) 
=\int_\Omega \sum_{k=1}^n\tau_{\H_3(\O_{\mathbb{C}})}\big( \mathbf{1}_{B_k}(\omega)a_k\big) \d\nu(\omega) \nonumber \\
&=\int_\Omega \tau_{\H_3(\O_{\mathbb{C}})}\bigg(\sum_{k=1}^n \mathbf{1}_{B_k}(\omega)a_k\bigg) \d\nu(\omega) 
=\int_\Omega \tau_{\H_3(\O_{\mathbb{C}})}(f(\omega)) \d\nu(\omega). \nonumber
\end{align}
Since $\H_3(\O_{\mathbb{C}})$ is finite-dimensional, every element of $\L^\infty(\Omega,\H_3(\O_{\mathbb{C}}))$ can be approximated in the essential supremum norm by simple $\H_3(\O_{\mathbb{C}})$-valued functions. Since $\tau$ and $\tau_{\H_3(\O_{\mathbb{C}})}$ are continuous, we deduce \eqref{eq-trace-disintegration-Albert}, i.e.,
\begin{equation}
\label{eq-trace-disintegration-Albert-bis}
\tau(f)
=
\int_\Omega \tau_{\H_3(\O_{\mathbb{C}})}(f(\omega)) \d\nu(\omega),
\quad f \in \L^\infty(\Omega,\H_3(\O_{\mathbb{C}})).
\end{equation}
The Jordan operations and the continuous functional calculus on $\L^\infty(\Omega,\H_3(\O_{\mathbb{C}}))$ are defined pointwise. Thus, for any $f \in \L^\infty(\Omega,\H_3(\O_{\mathbb{C}}))$ and almost every $\omega \in \Omega$,
\begin{equation}
\label{inter-39}
(f^* \circ f)(\omega)
=f(\omega)^* \circ f(\omega)
\end{equation}
and
\begin{equation}
\label{inter-40}
(f^*\circ f)^{\frac p2}(\omega)
=\big((f^*\circ f)(\omega)\big)^{\frac p2}
\ov{\eqref{inter-39}}{=}\big(f(\omega)^*\circ f(\omega)\big)^{\frac p2}.
\end{equation}
It follows from \eqref{eq-trace-disintegration-Albert} that
\begin{align*}
\MoveEqLeft
\norm{f}_{\L^p(\cal{M})}
\ov{\eqref{norm-Lp-intro}}{=} \Big(\tau\big[(f^*\circ f)^{\frac p2}\big]\Big)^{\frac{1}{p}}
\ov{\eqref{eq-trace-disintegration-Albert-bis}}{=} \bigg(\int_\Omega \tau_{\H_3(\O_{\mathbb{C}})}\big((f^*\circ f)^{\frac p2}(\omega)\big) \d\nu(\omega)\bigg)^{\frac{1}{p}}\\
&\ov{\eqref{inter-40}}{=} \bigg(\int_\Omega \tau_{\H_3(\O_{\mathbb{C}})}\left[ \big(f(\omega)^*\circ f(\omega)\big)^{\frac p2} \right] \d\nu(\omega)\bigg)^{\frac{1}{p}} 
\ov{\eqref{norm-Lp-intro}}{=}\bigg(\int_\Omega\norm{f(\omega)}_{\L^p(\H_3(\O_{\mathbb{C}}))}^p \d\nu(\omega)\bigg)^{\frac{1}{p}}. 
\end{align*}
This proves \eqref{eq-Lp-Bochner-Albert}.
\end{proof}

\begin{thm}
\label{thm-spectral-norm-Linfty-Albert}
Let $(\Omega,\Sigma,\mu)$ be a localizable measure space. Assume that $\cal{M}=\L^\infty(\Omega,\H_3(\O_{\mathbb{C}}))$ is equipped with a normal finite faithful trace $\tau$. Suppose that $1 \leq p < \infty$. Then \eqref{norm-Lp-intro} defines a norm on $\cal{M}=\L^\infty(\Omega,\H_3(\O_{\mathbb{C}}))$. Moreover, the completion $\L^p(\cal{M})$ is canonically isometric to the Bochner space $\L^p(\Omega,\nu,\L^p(\H_3(\O_{\mathbb{C}})))$.
\end{thm}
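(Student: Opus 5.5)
The plan is to reduce everything to Proposition~\ref{prop-traces-on-Linfty} combined with Theorem~\ref{thm-spectral-norm-complexified-Albert}. Set $E \overset{\mathrm{def}}{=} \L^p(\H_3(\O_{\mathbb{C}}))$. By Theorem~\ref{thm-spectral-norm-complexified-Albert}, $E$ is a finite-dimensional normed space, hence a Banach space. Let $\nu$ be the finite measure given by Proposition~\ref{prop-traces-on-Linfty}, which is equivalent to $\mu$. Then \eqref{eq-Lp-Bochner-Albert} says exactly that $\norm{f}_{\L^p(\cal{M})}=\norm{f}_{\L^p(\Omega,\nu,E)}$, where $f$ is viewed as an $E$-valued function.

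To make this precise, I first check that the canonical map $J\colon \L^\infty(\Omega,\H_3(\O_{\mathbb{C}}))\to \L^p(\Omega,\nu,E)$, $f\mapsto f$, is well defined and linear. Since $\nu\sim\mu$, $\mu$-a.e.\ classes coincide with $\nu$-a.e.\ classes. Moreover, $f$ is strongly measurable with values in a finite-dimensional space, and it is essentially bounded. As $\nu$ is finite, this gives $\int\norm{f(\omega)}_E^p \d\nu<\infty$.

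The norm axioms then follow from \eqref{eq-Lp-Bochner-Albert}, since $\norm{\cdot}_{\L^p(\cal M)}=\norm{J(\cdot)}_{\L^p(\Omega,\nu,E)}$ and $J$ is injective. The triangle inequality and homogeneity are inherited from the Bochner norm. Injectivity holds because $J(f)=0$ forces $f=0$ $\nu$-a.e., hence $\mu$-a.e. Alternatively, point separation is available from \cite[Proposition 3.1]{ArL26}.

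For the identification of the completion, $J$ is an isometry, so it extends to an isometry from the completion $\L^p(\cal{M})$ into $\L^p(\Omega,\nu,E)$. It remains to show that $J$ has dense range; this is the only point requiring care. I would use that simple functions $\sum_k \mathbf{1}_{B_k}a_k$, with $B_k$ measurable of finite $\nu$-measure and $a_k\in E$, are dense in the Bochner space $\L^p(\Omega,\nu,E)$ for $p<\infty$. Such functions are bounded, so they belong to $\L^\infty(\Omega,\H_3(\O_{\mathbb{C}}))$ as elements of the image of $J$. Here I use that the underlying vector space of $E$ is $\H_3(\O_{\mathbb{C}})$. Hence the range is dense, the extension is surjective, and the canonical isometric isomorphism $\L^p(\cal M)\cong\L^p(\Omega,\nu,\L^p(\H_3(\O_{\mathbb{C}})))$ follows. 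One minor point to verify is that measurability notions agree for $\mu$ and $\nu$. This is automatic since both measures live on the same $\sigma$-algebra $\Sigma$ and have the same null sets.
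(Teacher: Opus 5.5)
Your proposal is correct and follows essentially the same route as the paper. You use the fibrewise norm from Theorem~\ref{thm-spectral-norm-complexified-Albert}, the isometric embedding $f\mapsto f$ into $\L^p(\Omega,\nu,\L^p(\H_3(\O_{\mathbb C})))$ given by \eqref{eq-Lp-Bochner-Albert}, and density of simple functions, which lie in $\L^\infty$ since $\nu$ is finite. You also spell out well-definedness, injectivity via $\nu\sim\mu$, and the inheritance of the norm axioms through the isometric embedding, all of which the paper leaves implicit.
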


\begin{proof}
By Theorem \ref{thm-spectral-norm-complexified-Albert}, the space $\L^p(\H_3(\O_{\mathbb C}))$ is a finite-dimensional Banach space. By \eqref{eq-Lp-Bochner-Albert}, the canonical inclusion $
\L^\infty(\Omega,\H_3(\O_{\mathbb C})) \to \L^p\big(\Omega,\nu;\L^p(\H_3(\O_{\mathbb C}))\big)$ is isometric when the space on the left is equipped with $\norm{\cdot}_{\L^p(\cal M)}$. Since $\nu$ is finite, simple $\H_3(\O_{\mathbb C})$-valued functions belong to $\L^\infty(\Omega,\H_3(\O_{\mathbb C}))$ and are dense in the Bochner space $\L^p(\Omega,\nu;\L^p(\H_3(\O_{\mathbb C})))$ by \cite[Lemma 1.2.19 p.~23]{HvNVW16}. We conclude that the completion $\L^p(\cal M)$ is canonically isometric to this Bochner space.
\end{proof}

\section{Case of $\JBW^*$-algebras}
\label{sec-JBW-star}

Consider some $\JBW^*$-algebra $\cal{M}$ equipped with a normal finite faithful trace $\tau$. Assume that $1 \leq p < \infty$. In this section, we establish that the formula $\norm{x}_{\L^p(\cal{M})}
\ov{\mathrm{def}}{=} \big(\tau \big[(x^* \circ x)^{\frac{p}{2}}\big]\big)^{\frac{1}{p}}$, where $x \in \cal{M}$,  of \eqref{norm-Lp-intro} defines a norm on $\cal{M}$, by using a structure result on the $\JBW^*$-algebras.

\begin{thm}
\label{thm:JBW*-Lp-norm}
Let $\cal{M}$ be a $\JBW^*$-algebra equipped with a normal finite faithful trace $\tau$. Suppose that $1 \leq p < \infty$. The expression of \eqref{norm-Lp-intro} defines a norm on $\cal{M}$.
\end{thm}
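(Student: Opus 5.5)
The plan is to decompose $\cal{M}$ along its center into a special part and a purely exceptional part, and to reduce the triangle inequality to the two cases already settled. By the structure theory of $\JBW$-algebras (e.g.\ \cite[Theorem 7.2.7]{HOS84}, or the corresponding decomposition in \cite{AlS03}), the selfadjoint part $\cal{A}=\cal{M}_\sa$ admits a unique central projection $z$ such that $z\cal{A}=U_z(\cal{A})$ is a $\JW$-algebra and $(1-z)\cal{A}$ is purely exceptional. Moreover, $(1-z)\cal{A}$ is isomorphic to $\L^\infty(\Omega,\H_3(\O))$ for some localizable measure space $(\Omega,\Sigma,\mu)$. Complexifying and using the uniqueness of the $\JBW^*$-algebra with prescribed selfadjoint part, we will obtain a Jordan $*$-isomorphism
\[
\cal{M}\cong \cal{M}_1\oplus^\infty\cal{M}_2,
\]
where $\cal{M}_1=z\cal{M}$ is a $\JW^*$-algebra and $\cal{M}_2=(1-z)\cal{M}\cong\L^\infty(\Omega,\H_3(\O_{\mathbb C}))$.

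Next, we transport the trace. Since $z$ is central, the restrictions $\tau_1=\tau|_{\cal{M}_1}$ and $\tau_2=\tau|_{\cal{M}_2}$ are traces on the summands in the sense of \eqref{trace}. The identity $(z\circ x)\circ y=z\circ(x\circ y)$ gives the tracial property, exactly as in the computation for $\rho_B$ in the proof of Proposition \ref{prop-traces-on-Linfty}. These restrictions are also normal, finite and faithful, and they satisfy $\tau(x)=\tau_1(z\circ x)+\tau_2((1-z)\circ x)$. The operations are coordinatewise. Writing $x=x_1+x_2$ with $x_1=z\circ x$ and $x_2=(1-z)\circ x$, we have $x^*\circ x=x_1^*\circ x_1+x_2^*\circ x_2$ with orthogonal summands in different central pieces. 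The continuous functional calculus is therefore also coordinatewise, so $(x^*\circ x)^{p/2}=(x_1^*\circ x_1)^{p/2}+(x_2^*\circ x_2)^{p/2}$. Hence
\[
\norm{x}_{\L^p(\cal M)}^p=\norm{x_1}_{\L^p(\cal M_1)}^p+\norm{x_2}_{\L^p(\cal M_2)}^p .
\]

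By \cite{ArL26}, $\norm{\cdot}_{\L^p(\cal M_1)}$ is a norm on the $\JW^*$-algebra $\cal{M}_1$. By Theorem \ref{thm-spectral-norm-Linfty-Albert}, $\norm{\cdot}_{\L^p(\cal M_2)}$ is a norm on $\cal{M}_2$. The triangle inequality for $\cal{M}$ then follows from the Minkowski inequality in $\ell^p_2$:
\[
\norm{x+y}_{\L^p(\cal M)}\leq \big\|(\norm{x_1}+\norm{y_1},\norm{x_2}+\norm{y_2})\big\|_{\ell^p_2}\leq \norm{x}_{\L^p(\cal M)}+\norm{y}_{\L^p(\cal M)}.
\]
Homogeneity and point separation come from \cite[Proposition 3.1]{ArL26}, or directly from faithfulness. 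The degenerate cases $z=0$ or $z=1$ are included trivially.

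The main obstacle is the structural step. We must justify carefully that the purely exceptional part of a $\JBW$-algebra carrying a faithful normal finite trace is of the form $\L^\infty(\Omega,\H_3(\O))$ with $\Omega$ localizable. We must also justify that this identification is a Jordan $*$-isomorphism of the complexifications, so that the trace hypotheses of Theorem \ref{thm-spectral-norm-Linfty-Albert} apply. The key fact is that a purely exceptional $\JBW$-algebra is of type $\I_3$ with fibres $\H_3(\O)$, hence equal to $C(X,\H_3(\O))$ with $X$ hyperstonean. Finiteness of the trace should also rule out any infinite-type issues. The remaining check is that the spectral functional calculus commutes with the central decomposition, which is routine.
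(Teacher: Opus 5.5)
Your proposal is correct and follows essentially the same route as the paper. Both arguments use the central decomposition $\cal{M}=\cal{M}_{\mathrm{sp}}\oplus\cal{M}_{\mathrm{exp}}$ from the structure theorem for $\JBW$-algebras, split the trace and the spectral norm as an $\ell^p$-sum, and then invoke \cite[Theorem 3.7]{ArL26} on the special summand and Theorem \ref{thm-spectral-norm-Linfty-Albert} on the exceptional summand; your added checks (tracial property of the restricted traces via centrality, coordinatewise functional calculus) are details the paper leaves implicit.
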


\begin{proof}
Consider the $\JBW$-algebra $\A$ associated with the $\JBW^*$-algebra $\cal{M}$. We have $\cal{M}=\A+\i \A$. Note that by \cite[Theorem 3.9 p.~374]{Shu79} or \cite[Theorem 4.23 p.~112]{AlS03}, 
a $\JBW$-algebra $\A$ can be uniquely decomposed as a direct sum 
\begin{equation}
\label{decompo}
\A
=\A_{\mathrm{sp}} \oplus \A_{\textrm{exp}},
\end{equation}
where $\A_{\mathrm{sp}}$ is a $\JW$-algebra and $\A_{\textrm{exp}}$ is a purely exceptional $\JBW$-algebra, i.e., isomorphic to the real algebra $\L^\infty_\R(\Omega,\mu,\mathrm{H}_3(\O))$ for a localizable measure space $(\Omega,\mu)$.


The trace $\tau$ decomposes as a direct sum $\tau = \tau_{\mathrm{sp}} \oplus \tau_{\textrm{exp}}$ of normal finite faithful traces on the summands. We denote by $\cal{M}_{\mathrm{sp}}$ and $\cal{M}_{\textrm{exp}}$ the associated $\JW^*$-algebras. We have an identification $\cal{M}=\cal{M}_{\mathrm{sp}} \oplus \cal{M}_{\textrm{exp}}$. 
The formula defining $\norm{\cdot}_{\L^p(\cal{M})}$ is compatible with this direct sum decomposition, i.e., if $x = x_{\mathrm{sp}}+x_{\textrm{exp}}$ is an element of $\cal{M}$ then we have
\[
\norm{x}_{\L^p(\cal{M})}
\ov{\eqref{norm-Lp-intro}}{=} \big(\norm{x_{\mathrm{sp}}}_{\L^p(\cal{M}_{\mathrm{sp}})}^p+\norm{x_{\textrm{exp}}}_{\L^p(\cal{M}_{\textrm{exp}})}^p\big)^{\frac{1}{p}}.
\]
On the one hand, $\norm{\cdot}_{\L^p(\cal{M}_{\mathrm{sp}})}$ is a norm by \cite[Theorem 3.7]{ArL26}. On the other hand, $\norm{\cdot}_{\L^p(\cal{M}_{\textrm{exp}})}$ is a norm by Theorem \ref{thm-spectral-norm-Linfty-Albert}. Consequently, the $\ell^p$-sum of these norms is again a norm on the direct sum by \cite[Exercise 1.88 p.~68]{Meg98}. This proves the result.
\end{proof}

\section{Optimal comparison with interpolation nonassociative $\L^p$-spaces}
\label{sec-comparison-interpolation-Albert}

We finally compare the spectral spaces constructed previously with the nonassociative $\L^p$-spaces obtained by complex interpolation. For tracial $\JW^*$-algebras, \cite[Theorem 5.1]{ArL26} shows that the two constructions are equivalent with the optimal universal constant $2^{|\frac1p-\frac12|}$. It is not immediate that the same estimates should survive in the exceptional case, since $\H_3(\O_{\mathbb C})$ has no associative realization to which the comparison theorem of \cite{ArL26} could be applied globally.

The key observation is that if $x=a+\i b\in\H_3(\O_{\mathbb C})$, the unital real Jordan subalgebra generated by $a$ and $b$ is special by the Shirshov--Cohn theorem. Moreover, we have a positive contractive projection onto this finite-dimensional subalgebra, which is compatible with both endpoints of the interpolation couple. Consequently, both the spectral norm and the interpolation norm of a fixed element $x$ may be computed inside a suitable finite-dimensional $\JW^*$-subalgebra. The sharp estimates of \cite{ArL26} can therefore be transferred to the Albert algebra element by element. Combined with the Bochner descriptions of the spectral and interpolation spaces, this gives the same optimal constants on $\L^\infty(\Omega,\H_3(\O_{\mathbb C}))$.

Let $\cal{A} \ov{\mathrm{def}}{=} \H_3(\O)$, and let $\tau_{\H_3(\O_{\mathbb C})}$ denote the normalized trace on $\H_3(\O_{\mathbb C})$. The key point is that, although $\H_3(\O_{\mathbb C})$ itself is exceptional, the Jordan algebra generated by the real and imaginary parts of a single element is special by the Shirshov--Cohn theorem. We will combine this observation with the existence of trace-preserving Jordan conditional expectations established in \cite[Proposition 2.5]{Arh24a}, exactly as in the proof of \cite[Theorem 5.1]{ArL26}.

\begin{lemma}
\label{lemma-local-special-reduction-Albert}
Suppose that $1 \leq p < \infty$. For every $x \in \H_3(\O_{\mathbb C})$, there exists a finite-dimensional special $\JBW^*$-subalgebra $\cal N_x$ of $\H_3(\O_{\mathbb C})$ containing $x$ such that
\[
\norm{x}_{\L^p(\H_3(\O_{\mathbb C}))}=\norm{x}_{\L^p(\cal N_x)}
\quad\text{and}\quad
\norm{x}_{\L^{p,A}(\H_3(\O_{\mathbb C}))}=\norm{x}_{\L^{p,A}(\cal N_x)}.
\]
\end{lemma}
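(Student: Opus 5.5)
Write $x=a+\i b$ with $a,b\in\cal A=\H_3(\O)$. I will take $\cal B_x$ to be the unital real Jordan subalgebra of $\cal A$ generated by $a$ and $b$. Since $\cal A$ is finite-dimensional, $\cal B_x$ is finite-dimensional and hence norm (and weak*) closed, so it is a $\JBW$-subalgebra. By the Shirshov--Cohn theorem (a Jordan algebra generated by two elements is special), $\cal B_x$ is special. A finite-dimensional special formally real Jordan algebra is a $\JW$-algebra: for instance, it is a direct sum of nonexceptional simple formally real factors, each of which is realized on a Hilbert space. I then set $\cal N_x\ov{\mathrm{def}}{=}\cal B_x+\i\cal B_x$. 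This is a finite-dimensional special $\JBW^*$-subalgebra of $\H_3(\O_{\mathbb C})$ containing $x$. I equip it with the restriction $\tau_x$ of $\tau_{\H_3(\O_{\mathbb C})}$, which is again a faithful finite trace.

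\emph{Spectral norm.} We have $x^*\circ x=a^2+b^2\in\cal B_x$. The power $(a^2+b^2)^{p/2}$ is computed by functional calculus in the associative subalgebra generated by $a^2+b^2$ and $1$, and this subalgebra lies in $\cal B_x$. The functional calculus therefore does not depend on whether we work in $\cal B_x$ or in $\cal A$, so the equality $\norm{x}_{\L^p(\H_3(\O_{\mathbb C}))}=\norm{x}_{\L^p(\cal N_x)}$ is immediate from \eqref{norm-Lp-intro}.

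\emph{Interpolation norm.} This is the main step. I will invoke \cite[Proposition 2.5]{Arh24a} to obtain a trace-preserving normal positive unital Jordan conditional expectation $\mathbb E\co\H_3(\O_{\mathbb C})\to\cal N_x$. It is contractive on $\cal M=\H_3(\O_{\mathbb C})$. By trace preservation and the bimodule property $\tau(\mathbb E(y)\circ z)=\tau(y\circ z)$ for $z\in\cal N_x$, its predual map is also contractive on $\cal M_*$, where we identify $\cal M_*$ with $\cal M$ equipped with the $\L^1$-norm $\tau(|\cdot|)$ through duality. Interpolating gives that $\mathbb E$ is contractive from $\L^{p,A}(\cal M)$ to $\L^{p,A}(\cal N_x)$, hence $\norm{x}_{\L^{p,A}(\cal N_x)}=\norm{\mathbb E x}_{\L^{p,A}(\cal N_x)}\leq\norm{x}_{\L^{p,A}(\cal M)}$.

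For the reverse inequality, the inclusion $J\co\cal N_x\to\cal M$ is isometric at the $\infty$-endpoint. At the $1$-endpoint it is contractive because $J=\mathbb E_*^{\,*}$ restricted suitably: concretely, $\norm{y}_{\cal M_*}=\sup\{|\tau(y\circ z)|:\norm z_\infty\le1\}$, and for $y\in\cal N_x$ we have $\tau(y\circ z)=\tau(y\circ\mathbb E z)$ with $\norm{\mathbb E z}\le1$, which gives $\norm{y}_{\cal M_*}\le\norm y_{(\cal N_x)_*}$. Interpolation then yields $\norm{x}_{\L^{p,A}(\cal M)}\le\norm{x}_{\L^{p,A}(\cal N_x)}$.

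I expect the main obstacle to be checking that the duality pairing used to embed the predual is compatible on both algebras, and that \cite[Proposition 2.5]{Arh24a} applies in the exceptional setting, since its proof should only use $\JBW$-structure together with a faithful normal trace. I would verify this directly in finite dimensions, where $\mathbb E$ is simply the $\tau$-orthogonal projection onto $\cal N_x$. Positivity of this projection is the point requiring the cited result.
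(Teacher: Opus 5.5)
Your proof is correct and follows the paper's route: Shirshov--Cohn gives $\cal N_x=\cal B+\i\cal B$, the functional calculus of $a^2+b^2$ inside $\cal B$ handles the spectral norm, and the trace-preserving Jordan conditional expectation of \cite[Proposition 2.5]{Arh24a} handles the interpolation norm. The only difference is that you prove directly, by interpolating $\mathbb{E}$ and the inclusion at both endpoints, what the paper simply cites as \cite[Proposition 3.11]{Arh24a}.

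One correction: the norm that $\cal M_*$ induces on $\cal M$ is the dual norm $\sup\{|\tau(y\circ z)|:\norm{z}\leq 1\}$, not $\tau((y^*\circ y)^{\frac12})$. These differ for non-selfadjoint $y$, by a factor up to $\sqrt{2}$ as in Theorem~\ref{thm-optimal-comparison-Albert} at $p=1$. Your endpoint estimates only use the dual-norm formula, so nothing breaks.
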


\begin{proof}
Write $x=a+\i b$ with $a,b \in \cal A$, and let $\cal B$ be the unital Jordan subalgebra of $\cal A$ generated by $a$ and $b$. By the Shirshov--Cohn theorem \cite[Theorem 3.1.55 p.~337]{CGRP14}, $\cal{B}$ is special. Hence its complexification $\cal N_x\ov{\mathrm{def}}{=}\cal B+\i\cal B$ is a finite-dimensional special $\JBW^*$-subalgebra of $\H_3(\O_{\mathbb C})$, and may therefore be represented as a $\JW^*$-algebra.

By \cite[Proposition 2.5]{Arh24a}, there exists a $\tau_{\H_3(\O_{\mathbb C})}$-preserving normal faithful Jordan conditional expectation $Q_x \co \H_3(\O_{\mathbb C}) \to \cal N_x$.

By \cite[Proposition 3.11]{Arh24a}, $Q_x$ induces a contractive projection on $\L^{p,A}(\H_3(\O_{\mathbb C})))$ whose range is canonically isometric to $\L^{p,A}(\cal N_x)$ and whose restriction to $\cal N_x$ is the identity. Since $x \in \cal N_x$, we obtain $\norm{x}_{\L^{p,A}(\H_3(\O_{\mathbb C}))}=\norm{x}_{\L^{p,A}(\cal N_x)}$. Finally, $x^* \circ x=a^2+b^2\in\cal B$, and its functional calculus is the same in $\cal N_x$ and in $\H_3(\O_{\mathbb C})$. Hence $\norm{x}_{\L^p(\H_3(\O_{\mathbb C}))}=\norm{x}_{\L^p(\cal N_x)}$.
\end{proof}

We now pass to the measurable exceptional part. Let $(\Omega,\Sigma,\mu)$ be a localizable measure space, let $
\cal{M} \ov{\mathrm{def}}{=}\L^\infty(\Omega,\H_3(\O_{\mathbb C}))$, and suppose that $\cal{M}$ is equipped with a normal finite faithful trace $\tau$. By Proposition~\ref{prop-traces-on-Linfty}, there exists a finite measure $\nu$ equivalent to $\mu$ such that
\[
\tau(f)
=\int_\Omega\tau_{\H_3(\O_{\mathbb C})}(f(\omega))\d\nu(\omega),
\quad f \in \L^\infty(\Omega,\H_3(\O_{\mathbb C})).
\]

\begin{prop}
\label{prop-interpolation-Bochner-Albert}
Suppose that $1\leq p<\infty$. We have a canonical isometric identification
\begin{equation}
\label{eq-interpolation-Bochner-Albert}
\L^{p,A}(\cal M)
=\L^p\big(\Omega,\nu,\L^{p,A}(\H_3(\O_{\mathbb C}))\big).
\end{equation}
In particular, for any $f \in \L^\infty(\Omega,\H_3(\O_{\mathbb C}))$, we have
\[
\norm{f}_{\L^{p,A}(\cal{M})}
=
\left(\int_\Omega\norm{f(\omega)}_{\L^{p,A}(\H_3(\O_{\mathbb C}))}^p\d\nu(\omega)\right)^{\frac1p}.
\]
\end{prop}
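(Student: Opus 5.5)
The plan is to identify both endpoints of the interpolation couple with Bochner spaces and then use the classical fact that complex interpolation commutes with the Bochner construction. Write $E\ov{\mathrm{def}}{=}\H_3(\O_{\mathbb C})$, a finite-dimensional space, and $E_*$ for its predual. On $E_*$ we use the trace duality $\langle a,b\rangle=\tau_{\H_3(\O_{\mathbb C})}(a\circ b)$, so that $E_*$ is $E$ with the norm $\norm{\cdot}_{\L^1(E)}$. Normality of the trace should enter only in the identifications below.

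\textbf{Step 1: $\cal M$ as a Bochner space.} Since $E$ is finite-dimensional, $\cal M=\L^\infty(\Omega,E)$ isometrically. This holds because the $\JB^*$-norm of $f$ is the essential supremum of $\norm{f(\omega)}_E$: the Jordan operations are pointwise and the norm is determined by the spectral radius of $f^*\circ f$.

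\textbf{Step 2: the predual as a Bochner space.} We identify $\cal M_*$ with $\L^1(\Omega,\nu,E_*)$. Since $\mu$ and $\nu$ are equivalent, we may work with $\nu$, which is finite. The embedding of $\cal M$ into $\cal M_*$ is $f\mapsto\tau(f\circ\cdot)$. By Proposition \ref{prop-traces-on-Linfty} this equals $g\mapsto\int_\Omega\tau_{E}(f(\omega)\circ g(\omega))\d\nu(\omega)$. Because $E$ is finite-dimensional, it has the Radon--Nikod\'ym property, and $\L^1(\Omega,\nu,E_*)^*=\L^\infty(\Omega,\nu,E)$ via this pairing; this uses localizability for the $\L^1$--$\L^\infty$ duality. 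Uniqueness of the predual of a $\JBW^*$-algebra then gives $\cal M_*=\L^1(\Omega,\nu,E_*)$ isometrically. The inclusion $\cal M\subset\cal M_*$ is the natural inclusion $\L^\infty(E)\subset\L^1(E_*)$, fiberwise $E\to E_*$, which is the identity map.

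\textbf{Step 3: interpolation.} We invoke the standard result \cite{BeL76} (Triebel, or \cite{HvNVW16}) that for a compatible couple $(X_0,X_1)$ and $1\le p_0,p_1<\infty$,
$$
(\L^{p_0}(X_0),\L^{p_1}(X_1))_\theta=\L^{p}((X_0,X_1)_\theta),
\qquad
\frac1p=\frac{1-\theta}{p_0}+\frac{\theta}{p_1}.
$$
Here one endpoint is $p_0=\infty$, and this is the main obstacle. Since $\nu$ is finite and $E$ is finite-dimensional, simple functions are dense in the intersection, so the result still holds when one exponent is infinite (the exponent $p$ stays finite when $\theta>0$). If needed, I would give a direct proof. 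For the upper bound, given a simple $f=\sum\mathbf 1_{B_k}a_k$ with $\norm{f}_{\L^p(\L^{p,A}(E))}=1$, I would take near-optimal analytic functions $F_k$ for each $a_k$ in $(E,E_*)$, rescale them by $\norm{a_k}^{p/p_j}$-type factors as in the scalar Riesz--Thorin proof, and glue them. For the reverse bound I would use duality, via the duality theorem for the interpolation of $\L^1$ with $\L^\infty$, reducing to the bound just proved for the dual couple. With $\theta=\frac1p$ this gives \eqref{eq-interpolation-Bochner-Albert}. The displayed formula follows for $f\in\L^\infty$ by taking the norm in the Bochner space.
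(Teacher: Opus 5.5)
Your proposal is correct and is essentially the paper's proof: identify $\cal M=\L^\infty(\Omega,\nu,E)$ and $\cal M_*=\L^1(\Omega,\nu,E_*)$, with the trace embedding acting fiberwise, then apply vector-valued complex interpolation. The paper just cites \cite[Theorem 2.2.6 p.~91]{HvNVW16} for this last step, whereas you also justify the predual identification via the Radon--Nikod\'ym property and uniqueness of the predual, and you sketch the endpoint $p_0=\infty$.

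One caution about a side remark you never use: the norm of $E_*$ transported to $E$ by trace duality is the $\L^{1,A}$-norm, not the spectral norm $\tau_{\H_3(\O_{\mathbb C})}[(x^*\circ x)^{\frac12}]$. The two differ by the factor $\sqrt2$ on the element $x_0$ of Section~\ref{sec-comparison-interpolation-Albert}, so you should not write that norm as $\norm{\cdot}_{\L^1(E)}$.
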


\begin{proof}
Since $\H_3(\O_{\mathbb C})$ is finite-dimensional, we have the canonical isometric identifications $
\cal{M} =\L^\infty(\Omega,\nu,\H_3(\O_{\mathbb C}))$ and $\cal M_*=\L^1(\Omega,\nu,\H_3(\O_{\mathbb C})_*)$, and the trace embedding of $\cal{M}$ into $\cal{M}_*$ acts pointwise. Therefore, by the vector-valued complex interpolation theorem \cite[Theorem 2.2.6 p.~91]{HvNVW16}, we have isometrically
\begin{align*}
\MoveEqLeft
\L^{p,A}(\cal M)
=
\big(\L^\infty(\Omega,\nu,\H_3(\O_{\mathbb C})),\L^1(\Omega,\nu,\H_3(\O_{\mathbb C})_*)\big)_{\frac1p}
=
\L^p\big(\Omega,\nu,(\H_3(\O_{\mathbb C}),\H_3(\O_{\mathbb C})_*)_{\frac1p}\big) \\
&= \L^p(\Omega,\nu,\L^{p,A}(\H_3(\O_{\mathbb C}))).
\end{align*}
\end{proof}

\begin{thm}
\label{thm-optimal-comparison-Albert}
Let $\cal M=\L^\infty(\Omega,\H_3(\O_{\mathbb C}))$ be equipped with a normal finite faithful trace. Suppose that $1\leq p<\infty$.

\begin{enumerate}
\item If $1\leq p\leq2$, then for every $f\in\cal M$,
\begin{equation}
\label{eq-optimal-comparison-Albert-p-le-2}
\norm{f}_{\L^{p,A}(\cal M)}
\leq
\norm{f}_{\L^p(\cal M)}
\leq
2^{\frac1p-\frac12}\norm{f}_{\L^{p,A}(\cal M)}.
\end{equation}

\item If $2\leq p<\infty$, then for every $f\in\cal M$,
\begin{equation}
\label{eq-optimal-comparison-Albert-p-ge-2}
\norm{f}_{\L^p(\cal M)}
\leq
\norm{f}_{\L^{p,A}(\cal M)}
\leq
2^{\frac12-\frac1p}\norm{f}_{\L^p(\cal M)}.
\end{equation}
\end{enumerate}
The constants are optimal.
\end{thm}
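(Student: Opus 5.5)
The plan is to prove the estimates fibrewise on $\H_3(\O_{\mathbb C})$ first, and then integrate them using the two Bochner descriptions.

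\textbf{Step 1 (the factor $\H_3(\O_{\mathbb C})$).} Fix $x\in\H_3(\O_{\mathbb C})$. Lemma~\ref{lemma-local-special-reduction-Albert} gives a finite-dimensional special $\JBW^*$-subalgebra $\cal N_x$, realized as a $\JW^*$-algebra. On $\cal N_x$ both norms of $x$ coincide with the corresponding norms computed in $\H_3(\O_{\mathbb C})$. Here $\cal N_x$ carries the restriction of $\tau_{\H_3(\O_{\mathbb C})}$, which is a normal finite faithful trace on it. Therefore \cite[Theorem 5.1]{ArL26} applies to $\cal N_x$ and gives, for $1\leq p\leq 2$,
\[
\norm{x}_{\L^{p,A}(\H_3(\O_{\mathbb C}))}\leq\norm{x}_{\L^p(\H_3(\O_{\mathbb C}))}\leq 2^{\frac1p-\frac12}\norm{x}_{\L^{p,A}(\H_3(\O_{\mathbb C}))},
\]
and the symmetric inequalities for $p\geq2$. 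The constants do not depend on $x$, so these estimates hold on all of $\H_3(\O_{\mathbb C})$.

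\textbf{Step 2 (integration).} Take $f\in\cal M$. Proposition~\ref{prop-traces-on-Linfty} expresses $\norm{f}_{\L^p(\cal M)}^p$ as the integral of $\norm{f(\omega)}^p_{\L^p(\H_3(\O_{\mathbb C}))}$ against $\nu$. Proposition~\ref{prop-interpolation-Bochner-Albert} gives the same formula for $\norm{f}^p_{\L^{p,A}(\cal M)}$ with the interpolation norm in the fibre. Raise the pointwise inequalities of Step 1, applied to $x=f(\omega)$, to the power $p$ and integrate in $\omega$. Taking $p$-th roots then yields \eqref{eq-optimal-comparison-Albert-p-le-2} and \eqref{eq-optimal-comparison-Albert-p-ge-2}.

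\textbf{Step 3 (optimality).} This is the delicate step. In \cite{ArL26} optimality is witnessed by an explicit element of a small $\JW^*$-algebra. I expect this to be a $2\times2$ symmetric-matrix type example, or a spin-factor type element $a+\i b$ with suitable anticommuting symmetries $a,b$, where $x^*\circ x$ differs from $|x|^2$. I would first try to embed that witness algebra unitally into $\H_3(\O)_{\mathbb C}$. For instance, $\H_3(\mathbb{R})$ and $\H_3(\mathbb{C})$ sit inside $\H_3(\O)$, and so does $\H_2(\mathbb{C})$ placed in a corner and completed by a diagonal entry.

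The constraint is the trace. The normalized trace of $\H_3(\O_{\mathbb C})$ restricts to the normalized trace of $\H_3(\mathbb C)_{\mathbb C}=\M_3$. Also, by Lemma~\ref{lemma-local-special-reduction-Albert}, both norms of an element of this subalgebra are computed intrinsically there. Hence any element of $\M_3$, viewed with the Jordan product, at which \cite{ArL26} attains the constants transfers verbatim. If the witness of \cite{ArL26} lives in $\M_2$, I would realize it as $\mathrm{diag}(y,0)$ in $\M_3$ under a trace rescaling. Both norms are homogeneous in the trace scaling by the same factor $\tau(1)^{1/p}$, so the ratio is unchanged.

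Finally, take $f$ constant equal to this element. Its two norms over $\Omega$ are then multiplied by the same factor $\nu(\Omega)^{1/p}$. The ratio is therefore attained on $\cal M$ as well, which gives optimality there. The main obstacle is checking that the optimal example of \cite{ArL26} is indeed realizable inside a special corner of the Albert algebra. If it requires an infinite family, such as spin factors of growing dimension, I would instead look for an extremal element in $\H_3(\mathbb{C})_{\mathbb C}$ directly.
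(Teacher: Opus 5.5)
Your proposal follows the paper's proof essentially step by step: fibrewise estimates via Lemma~\ref{lemma-local-special-reduction-Albert} and \cite[Theorem 5.1]{ArL26}, integration via Proposition~\ref{prop-traces-on-Linfty} and Proposition~\ref{prop-interpolation-Bochner-Albert}, and optimality via a $2\times 2$ witness placed in a special corner $\M_2(\mathbb C)\oplus\mathbb C$ of $\H_3(\O_{\mathbb C})$ (the normalization factor cancels) and then taken as a constant function. Your hesitation in Step 3 resolves in your favour: the witness is the matrix unit $x_0=\begin{bmatrix}0&1\\0&0\end{bmatrix}$, which is indeed of the form $a+\i b$ with anticommuting scaled symmetries, and for which $x_0^*\circ x_0=\frac12 I_2$ while $|x_0|$ is a rank-one projection, so the quotient of the two norms is $2^{\frac1p-\frac12}$ and is extremal in both ranges; you should also add the one-line remark, made in the paper, that the constants equal to $1$ are optimal because the two norms agree on selfadjoint elements.
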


\begin{proof}
Let $x \in \H_3(\O_{\mathbb C})$. By Lemma~\ref{lemma-local-special-reduction-Albert}, both norms of $x$ may be computed in a finite-dimensional special $\JBW^*$-subalgebra $\cal N_x$. Applying \cite[Theorem 5.1]{ArL26} to $\cal N_x$, we obtain
\[
\norm{x}_{\L^{p,A}(\H_3(\O_{\mathbb C}))}
\leq
\norm{x}_{\L^p(\H_3(\O_{\mathbb C}))}
\leq
2^{\frac1p-\frac12}\norm{x}_{\L^{p,A}(\H_3(\O_{\mathbb C}))}
\]
if $1\leq p\leq2$, and
\[
\norm{x}_{\L^p(\H_3(\O_{\mathbb C}))}
\leq
\norm{x}_{\L^{p,A}(\H_3(\O_{\mathbb C}))}
\leq
2^{\frac12-\frac1p}\norm{x}_{\L^p(\H_3(\O_{\mathbb C}))}
\]
if $2\leq p<\infty$.

The spectral Bochner formula of Theorem~\ref{thm-spectral-norm-Linfty-Albert} and Proposition~\ref{prop-interpolation-Bochner-Albert} allow us to integrate these pointwise inequalities, which gives \eqref{eq-optimal-comparison-Albert-p-le-2} and \eqref{eq-optimal-comparison-Albert-p-ge-2}. It remains to prove optimality. Choose an imaginary octonionic unit $j \in \O$ with $j^2=-1$ and set $\mathbb{C}_j \ov{\mathrm{def}}{=} \Span_\R \{1,j\}$. The complexification of the unital Jordan subalgebra
\[
\left\{
\begin{bmatrix}
\alpha&z&0\\
\overline z&\beta&0\\
0&0&\gamma
\end{bmatrix}
:
\alpha,\beta,\gamma\in\R,\ z\in\mathbb C_j
\right\}
\subset\H_3(\O)
\]
is a unital special $\JBW^*$-subalgebra $\cal N_0$ of $\H_3(\O_{\mathbb C})$ isomorphic to $\M_2(\mathbb C)\oplus\mathbb C$, and the restriction of $\tau_{\H_3(\O_{\mathbb C})}$ is given by
\[
\tau_{\H_3(\O_{\mathbb C})}(z,\lambda)=\frac13\big(\Tr(z)+\lambda\big).
\]
Consider the element $x_0 \ov{\mathrm{def}}{=}
\left(
\begin{bmatrix}
0&1\\
0&0
\end{bmatrix},0
\right)$ of $\cal N_0$. The computation of \cite[Remark 4.8]{ArL26}, applied to the first summand, gives
\[
\frac{\norm{x_0}_{\L^p(\cal N_0)}}{\norm{x_0}_{\L^{p,A}(\cal N_0)}}
=
2^{\frac1p-\frac12}.
\]
The factor $3^{-\frac{1}{p}}$ coming from the normalization of the trace occurs in both norms and hence cancels. By Lemma~\ref{lemma-local-special-reduction-Albert}, the same quotient is obtained when the norms are computed in $\H_3(\O_{\mathbb C})$. Taking the constant function $\omega \mapsto x_0$ gives the same quotient in $\cal{M}$. This proves the optimality of the nontrivial constants. The constants equal to $1$ are optimal since the spectral and interpolation norms coincide on selfadjoint elements.
\end{proof}

The preceding theorem, together with the corresponding result for $\JW^*$-algebras, yields the comparison for arbitrary $\JBW^*$-algebras.

\begin{cor}
\label{cor-optimal-comparison-general-JBW}
Let $\cal M$ be a $\JBW^*$-algebra equipped with a normal finite faithful trace $\tau$, and suppose that $1\leq p<\infty$.

\begin{enumerate}
\item If $1\leq p\leq2$, then for every $x\in\cal M$,
\begin{equation}
\label{eq-optimal-comparison-general-p-le-2}
\norm{x}_{\L^{p,A}(\cal M)}
\leq
\norm{x}_{\L^p(\cal M)}
\leq
2^{\frac1p-\frac12}\norm{x}_{\L^{p,A}(\cal M)}.
\end{equation}

\item If $2\leq p<\infty$, then for every $x\in\cal M$,
\begin{equation}
\label{eq-optimal-comparison-general-p-ge-2}
\norm{x}_{\L^p(\cal M)}
\leq
\norm{x}_{\L^{p,A}(\cal M)}
\leq
2^{\frac12-\frac1p}\norm{x}_{\L^p(\cal M)}.
\end{equation}
\end{enumerate}

The constants $2^{|\frac1p-\frac12|}$ are optimal as universal constants over the class of tracial $\JBW^*$-algebras.
\end{cor}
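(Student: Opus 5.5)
The plan is to reduce to the two pieces of the structure theorem, as in the proof of Theorem~\ref{thm:JBW*-Lp-norm}. First, I decompose $\cal M_\sa=\A_{\mathrm{sp}}\oplus\A_{\mathrm{exp}}$ as in \eqref{decompo}. This gives $\cal M=\cal M_{\mathrm{sp}}\oplus\cal M_{\mathrm{exp}}$, where $\cal M_{\mathrm{sp}}$ is a $\JW^*$-algebra and $\cal M_{\mathrm{exp}}\cong\L^\infty(\Omega,\H_3(\O_{\mathbb C}))$. The trace splits accordingly as $\tau=\tau_{\mathrm{sp}}\oplus\tau_{\mathrm{exp}}$, with both summands normal, finite and faithful. (Either summand may be zero; that case is trivial.)

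Next I check that both norms are $\ell^p$-sums over this decomposition. For the spectral norm this was already observed in the proof of Theorem~\ref{thm:JBW*-Lp-norm}: $\norm{x}_{\L^p(\cal M)}^p=\norm{x_{\mathrm{sp}}}^p+\norm{x_{\mathrm{exp}}}^p$. For the interpolation norm I use $\cal M=\cal M_{\mathrm{sp}}\oplus_\infty\cal M_{\mathrm{exp}}$ and $\cal M_*=(\cal M_{\mathrm{sp}})_*\oplus_1(\cal M_{\mathrm{exp}})_*$. The trace embedding respects this splitting because the central projections are preserved. The standard interpolation of $\ell^\infty$- and $\ell^1$-direct sums of compatible couples (the two-point case of the vector-valued result \cite[Theorem 2.2.6 p.~91]{HvNVW16}, or a direct argument with the two central projections as complemented retractions) then gives
\[
\norm{x}_{\L^{p,A}(\cal M)}^p=\norm{x_{\mathrm{sp}}}_{\L^{p,A}(\cal M_{\mathrm{sp}})}^p+\norm{x_{\mathrm{exp}}}_{\L^{p,A}(\cal M_{\mathrm{exp}})}^p.
\]
I expect this step to be the main technical point. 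I would justify it by noting that the central projection $e$ induces contractive projections $x\mapsto e\circ x$ on both endpoints, compatible with the trace embedding, together with the fact that $\ell^p$-sums of interpolated spaces arise from $(\ell^\infty,\ell^1)$-sums.

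Then I apply the comparison estimates to each summand. On $\cal M_{\mathrm{sp}}$, \cite[Theorem 5.1]{ArL26} gives for instance $\norm{x_{\mathrm{sp}}}_{\L^{p,A}}\le\norm{x_{\mathrm{sp}}}_{\L^p}\le 2^{\frac1p-\frac12}\norm{x_{\mathrm{sp}}}_{\L^{p,A}}$ when $p\le2$. On $\cal M_{\mathrm{exp}}$, Theorem~\ref{thm-optimal-comparison-Albert} gives the same inequalities. Raising to the power $p$ and summing yields \eqref{eq-optimal-comparison-general-p-le-2}, and \eqref{eq-optimal-comparison-general-p-ge-2} follows identically.

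Finally, optimality as universal constants is witnessed by a single example already in the class, namely $\M_2(\mathbb C)$ with its trace and $x_0=\begin{bmatrix}0&1\\0&0\end{bmatrix}$ from \cite[Remark 4.8]{ArL26}. Alternatively, one can take the exceptional example built in the proof of Theorem~\ref{thm-optimal-comparison-Albert}. The constant $1$ is attained on selfadjoint elements, where the two norms coincide.
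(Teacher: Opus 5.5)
Your proposal is correct and follows essentially the same route as the paper. Both arguments split $\cal M=\cal M_{\mathrm{sp}}\oplus\cal M_{\mathrm{exp}}$ by the structure theorem and observe that the spectral norm and the interpolation norm are $\ell^p$-sums over this splitting (the latter by interpolating the $(\oplus_\infty,\oplus_1)$ couple). Both then apply \cite[Theorem 5.1]{ArL26} and Theorem~\ref{thm-optimal-comparison-Albert} to the respective summands, and witness optimality on $\M_2(\mathbb C)$ or $\H_3(\O_{\mathbb C})$.

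One minor point about the citation for the direct-sum step. \cite[Theorem 2.2.6]{HvNVW16} concerns a single couple over a measure space, so it does not directly cover two different couples. Your central-projection argument, or the standard result on interpolating $\ell^{p_0}$- and $\ell^{p_1}$-sums of couples, is the cleaner justification there.
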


\begin{proof}
Let $\cal A=\cal M_{\sa}$. By the structure theorem \cite[Theorem 3.9 p.~374]{Shu79} \cite[Theorem 4.23 p.~112]{AlS03} for $\JBW$-algebras, we have a decomposition $\cal{A}=\cal A_{\mathrm{sp}}\oplus\cal A_{\mathrm{exp}}$, where $\cal A_{\mathrm{sp}}$ is a $\JW$-algebra and $\cal A_{\exp}$ is purely exceptional, hence isomorphic to an algebra of the form $\L^\infty_\R(\Omega,\H_3(\O))$ for a localizable measure space $\Omega$. After complexification, we have $\cal{M} = \cal M_{\mathrm{sp}} \oplus \cal M_{\exp}$, where $\cal M_{\mathrm{sp}}$ is a $\JW^*$-algebra and $\cal M_{\mathrm{exp}}$ is isomorphic to $\L^\infty(\Omega,\H_3(\O_{\mathbb C}))$. The trace decomposes as $\tau=\tau_{\mathrm{sp}} \oplus \tau_{\mathrm{exp}}$. If $x=x_{\mathrm{sp}}+x_{\mathrm{exp}}$, then
\[
\norm{x}_{\L^p(\cal M)}^p
=
\norm{x_{\mathrm{sp}}}_{\L^p(\cal M_{\mathrm{sp}})}^p
+
\norm{x_{\mathrm{exp}}}_{\L^p(\cal M_{\mathrm{exp}})}^p.
\]
Moreover, since $\cal{M} = \cal{M}_{\mathrm{sp}} \oplus_\infty \cal{M}_{\mathrm{exp}}$ and $\cal M_*=(\cal{M}_{\mathrm{sp}})_*\oplus_1 (\cal{M}_{\mathrm{exp}})_*$, the standard interpolation formula for direct sums gives
\[
\L^{p,A}(\cal{M})
=
\L^{p,A}(\cal{M}_{\mathrm{sp}}) \oplus_p \L^{p,A}(\cal{M}_{\mathrm{exp}})
\]
isometrically. Hence
\[
\norm{x}_{\L^{p,A}(\cal M)}^p
=
\norm{x_{\mathrm{sp}}}_{\L^{p,A}(\cal{M}_{\mathrm{sp}})}^p
+
\norm{x_{\mathrm{exp}}}_{\L^{p,A}(\cal{M}_{\mathrm{exp}})}^p.
\]
Applying \cite[Theorem 5.1]{ArL26} to the special summand and Theorem~\ref{thm-optimal-comparison-Albert} to the exceptional summand, and then taking the $\ell^p$-sum, gives \eqref{eq-optimal-comparison-general-p-le-2} and \eqref{eq-optimal-comparison-general-p-ge-2}. The universal constants cannot be improved, since they are already optimal on the matrix algebra $\M_2(\mathbb{C})$, and also on the exceptional algebra $\H_3(\O_{\mathbb{C}})$ by Theorem~\ref{thm-optimal-comparison-Albert}.
\end{proof}


\paragraph{Competing interests} The authors declares that they have no competing interests.

\paragraph{Data availability} No data sets were generated during this study.

{\footnotesize

\vspace{0.2cm}

\noindent C\'edric Arhancet\\ 
\noindent 6 rue Didier Daurat, 81000 Albi, France\\
URL: \href{http://sites.google.com/site/cedricarhancet}{https://sites.google.com/site/cedricarhancet}\\
cedric.arhancet@protonmail.com\\
ORCID: 0000-0002-5179-6972 

}

\end{document}